\newtheorem{theorem}{Theorem}[section]
\newtheorem{corollary}[theorem]{Corollary}
\newtheorem{lemma}[theorem]{Lemma}
\newtheorem{proposition}[theorem]{Proposition}
\newtheorem{definition}[theorem]{Definition}
\newtheorem{question}[theorem]{Question}
\theoremstyle{remark}
\newtheorem{remark}[theorem]{Remark}
\newcommand{\set}[1]{\,\left\{#1\right\}}
\newcommand{\setd}[2]{\,\left\{#1\ \colon\ #2\right\}}
\newcommand{\supp}{\operatorname{supp}}
\newcommand{\ZZ}{\mathbb{Z}}
\newcommand{\CC}{\mathbb{C}}
\newcommand{\NN}{\mathbb{N}}
\newcommand{\DD}{\mathbb{D}}
\newcommand{\opi}{\overline{\pi}}
\newcommand{\prop}{\operatorname{prop}}
\begin{document}
\title{Expanders are counterexamples to the $\ell^p$ coarse Baum-Connes conjecture}
\author{Yeong Chyuan Chung \and Piotr W. Nowak}

\date{\today}
\maketitle

\begin{abstract}
We consider an $\ell^p$ coarse Baum-Connes assembly map for $1<p<\infty$, and show that it is not surjective for expanders arising from residually finite hyperbolic groups.
\end{abstract}

\tableofcontents

\section{Introduction}

The coarse Baum-Connes conjecture, first formulated in \cite{Roe}, is a coarse geometric analog of the original Baum-Connes conjecture for groups \cite{BCH}, and it posits that a certain coarse assembly map or index map is an isomorphism. On one side of the map is a topological object involving the $K$-homology of Rips complexes of a bounded geometry metric space, while on the other side is the $K$-theory of a certain $C^\ast$-algebra associated to the metric space, namely the Roe algebra, which encodes the large scale geometry of the space. 
The significance of this conjecture lies in its applications in geometry and topology, which includes the Novikov conjecture when the metric space is a finitely generated group equipped with the word metric, and also the problem of existence of positive scalar curvature metrics, as well as Gromov's zero-in-the-spectrum conjecture, when the space is a Riemannian manifold. In fact, injectivity of the coarse assembly map (commonly referred to as the coarse Novikov conjecture) is sufficient for some of these applications.

The conjecture has been proven in a number of cases. Yu showed that it holds for metric spaces that coarsely embed into Hilbert space \cite{Yu00}, generalizing his earlier work showing that it holds for spaces with finite asymptotic dimension \cite{Yu98}. In \cite{Yu98}, there is also a counterexample to the coarse Novikov conjecture when the condition of bounded geometry is omitted. More recent positive results on the coarse Baum-Connes conjecture include \cite{FO12,FO15,FO16,FO18} by Fukaya and Oguni.

On the other hand, Higson \cite{Hig99} showed that the coarse assembly map is not surjective for certain Margulis-type expanders. Then in \cite{HLS}, Higson-Lafforgue-Skandalis showed that for any expander, either the coarse assembly map fails to be surjective or the Baum-Connes assembly map with certain coefficients for an associated groupoid fails to be injective, and that the former always occurs for certain Margulis-type expanders.
Although expanders provide counterexamples to surjectivity of the coarse assembly map, it is known that the map (or the version with maximal Roe algebras) is injective for certain classes of expanders \cite{ONL,GWY,GTY,OYmax}.

While most of the results mentioned in the previous paragraph only apply to Margulis-type expanders, Willett and Yu in \cite{WillettYuI} considered spaces of graphs with large girth and showed that the coarse assembly map is injective for such spaces while it is not surjective if the space is a weak expander. For the maximal version of the coarse assembly map, they showed in \cite{WillettYuII} that it is an isomorphism for such spaces if there is a uniform bound on the vertex degrees of the graphs. They also discussed how their methods can be modified to yield the same results for a version of the coarse assembly map for uniform Roe algebras formulated by \v{S}pakula \cite{Spakula}.

In this paper, our goal is to consider an $\ell^p$ analog of the coarse Baum-Connes assembly map for $1<p<\infty$, and show that it fails to be surjective for certain expanders by adapting the arguments in \cite{WillettYuI}. It should be noted that techniques used in the $C^\ast$-algebraic setting often do not transfer to the $\ell^p$ setting in a straightforward manner. 

Although the $\ell^p$ analog of the coarse Baum-Connes conjecture has no known geometric or topological applications when $p\neq 2$, in light of interest in $L^p$ (or $\ell^p$) operator algebras in recent years (e.g. \cite{Chung2,ChungLiRigidity,ChungLiLpunifRoe,Engel,Gardella,GardLupGrpoid,GardThielGrp,GardThielConv,Pooya,LY,Phil12,Phil13,PhilBle,PhilViola}), the study of assembly maps involving $L^p$ operator algebras contributes to our general understanding of the $K$-theory of some of these algebras. 
We also note that other assembly maps involving $L^p$ operator algebras have recently been considered in \cite{Chung2,Engel}, as well as in unpublished work of Kasparov-Yu. In similar spirit,  the Bost conjecture \cite{Laf02,Par09,Par13,Skan} asks whether the Baum-Connes-type assembly map into the $K$-theory
of the Banach algebra $L^1(G)$ is an isomorphism for a locally compact group $G$.

The description of the assembly map that we use is equivalent to that in \cite{WillettYuI} when $p=2$ in the sense that one is an isomorphism if and only if the other is. This equivalence was established in \cite{LocAlgBC}, where Yu introduced localization algebras and showed that a local index map from $K$-homology to the $K$-theory of the localization algebra is an isomorphism for finite-dimensional simplicial complexes. Qiao and Roe \cite{QR} later showed that this isomorphism holds for general locally compact metric spaces. By considering analogs of Roe algebras and localization algebras represented on appropriate $\ell^p$ spaces, we obtain an $\ell^p$ analog of the coarse Baum-Connes assembly map of the form
\[ \mu:\lim_{R\rightarrow\infty}K_*(B^p_L(P_R(X)))\rightarrow K_*(B^p(X)), \]
and the $\ell^p$ coarse Baum-Connes conjecture is the statement that this map is an isomorphism. 
Following the ideas in \cite{QR} and \cite[Appendix B]{GuWY}, we also provide an alternative form of the assembly map (see Theorem \ref{ThmAltAssembly}).

Zhang and Zhou showed in \cite[Proposition 5.20]{ZhangZhou} that the left-hand side of the $\ell^p$ coarse Baum-Connes assembly map can be naturally identified with that of the original coarse Baum-Connes assembly map. Therefore, given a metric space $X$ with bounded geometry, if the $\ell^p$ coarse Baum-Connes conjecture for $X$ holds for a range of values of $p$, then the $K$-theory groups $K_*(B^p(X))$ are independent of $p$ in that range. This is true when $X$ has finite asymptotic dimension and $p\in(1,\infty)$ by \cite[Theorem 4.6]{ZhangZhou}.
Also, Shan and Wang showed in \cite{ShanWang} that the $\ell^p$ coarse Baum-Connes assembly map is injective for $p\in(1,\infty)$ when $X$ coarsely embeds into a simply connected complete Riemannian manifold of non-positive sectional curvature.

We end the introduction with the statement of our main theorem: 

\begin{theorem}
Let $p\in(1,\infty)$. Let $G$ be a residually finite hyperbolic group. Let $N_1\supseteq N_2\supseteq\cdots$ be a sequence of normal subgroups of finite index such that $\bigcap_iN_i=\{e\}$. Assume that the box space $\square G=\bigsqcup_i G/N_i$ is an expander, i.e., that $G$ has property $\tau$ with respect to the family $\{N_i\}$. If $q\in B^p(\square G)$ is the Kazhdan projection associated to $\square G$, then $[q]\in K_0(B^p(\square G))$ is not in the image of the $\ell^p$ coarse Baum-Connes assembly map. 
\end{theorem}


As examples, our result applies to finitely generated free groups and $SL_2(\ZZ)$.

One can see from the proof of our main result that the hyperbolicity assumption can in fact be replaced by the following set of conditions:
\begin{enumerate}
\item $G$ has the $p$-operator norm localization property.
\item The $\ell^p$ Baum-Connes assembly map for each $N_i$ is injective.
\item The classifying space for proper $G$-actions has finite homotopy type, i.e., there is a model $Z$ of a locally finite CW complex with universal proper $G$-action such that $Z/G$ is a compact CW complex.
\end{enumerate}

\subsection*{Acknowledgements} 
We would like to thank Rufus Willett for his illuminating comments.
We also thank the referee for carefully reading our manuscript and providing useful suggestions to improve our exposition.

This project has received funding from the European Research Council (ERC) under the European Union's Horizon 2020 
research and innovation programme (grant agreement no. 677120-INDEX).

\section{The $\ell^p$ coarse Baum-Connes assembly map}

In this section, we define the $\ell^p$ coarse Baum-Connes assembly map and also consider the equivariant version of the assembly map.

\subsection{Roe algebras and localization algebras}
We begin by introducing the $\ell^p$ Roe algebra and $\ell^p$ localization algebra, whose $K$-theory groups are used to define the $\ell^p$ coarse Baum-Connes assembly map.

\begin{definition}
Let $(X,d)$ be a metric space. 
\begin{enumerate}
\item We say that $X$ is uniformly discrete if there exists $\delta>0$ such that $d(x,y)\geq\delta$ for all distinct points $x,y\in X$.
\item If $X$ is uniformly discrete, we say that it has bounded geometry if for all $R>0$ there exists $N_R\in\NN$ such that all balls of radius $R$ in $X$ have cardinality at most $N_R$.
\item We say that $X$ is proper if all closed balls in $X$ are compact.
\item A net in $X$ is a discrete subset $Y\subseteq X$ such that there exists $r>0$ with the properties that $d(x,y)\geq r$ for all $x,y\in Y$, and for any $x\in X$ there is $y\in Y$ with $d(x,y)<r$.
\item If $X$ is proper, we say that it has bounded geometry if it contains a net with bounded geometry.
\end{enumerate}
\end{definition}

We now associate certain $\ell^p$ operator algebras called Roe algebras to proper metric spaces. These algebras encode the large scale geometry of the metric space, and the $K$-theory of the Roe algebra serves as the target of the $\ell^p$ coarse Baum-Connes assembly map. 

\begin{definition} \label{DefRoe}
Let $X$ be a proper metric space, and fix a countable dense subset $Z\subseteq X$. Let $T$ be a bounded operator on $\ell^p(Z,\ell^p)$, and write $T=(T_{x,y})_{x,y\in Z}$ so that each $T_{x,y}$ is a bounded operator on $\ell^p$. $T$ is said to be locally compact if
\begin{itemize}
\item each $T_{x,y}$ is a compact operator on $\ell^p$;
\item for every bounded subset $B\subseteq X$, the set \[\setd{(x,y)\in (B\times B)\cap(Z\times Z)}{T_{x,y}\neq 0}\]is finite.
\end{itemize}
The propagation of $T$ is defined to be
\[ \prop(T)=\inf\setd{S>0}{T_{x,y}=0\;\text{for all}\;x,y\in Z\;\text{with}\;d(x,y)>S}. \]
The algebraic Roe algebra of $X$, denoted $\CC^p[X]$, is the subalgebra of $B(\ell^p(Z,\ell^p))$ consisting of all finite propagation, locally compact operators. The $\ell^p$ Roe algebra of $X$, denoted $B^p(X)$, is the closure of $\CC^p[X]$ in $B(\ell^p(Z,\ell^p))$. 

If $X$ is uniformly discrete, we define $\CC^p_u[X]$ to be the subalgebra of $B(\ell^p(X))$ consisting of all finite propagation operators, and we define the $\ell^p$ uniform Roe algebra of $X$, denoted $B^p_u(X)$, to be the closure of $\CC^p_u[X]$ in $B(\ell^p(X))$. 
\end{definition}

One can check that just like in the $p=2$ case, up to non-canonical isomorphism, $B^p(X)$ does not depend on the choice of dense subspace $Z$, while up to canonical isomorphism, its $K$-theory does not depend on the choice of $Z$. 
These follow from \cite[Corollary 2.9]{ZhangZhou} (and the results preceding it).
Moreover, $B^p(X)$ is a coarse invariant, as noted in \cite{ChungLiRigidity}.

\begin{remark} 
In the definition above, one may consider bounded operators on $\ell^p(Z,E)$ for a fixed separable infinite-dimensional $L^p$ space $E$.
We refer the reader to \cite[Subsection 4.9.3]{Pietsch} and the references therein for the following classification of the separable infinite-dimensional $L^p$ spaces when $p\in[1,\infty)\setminus\{2\}$:
\begin{itemize}
\item Up to isometric isomorphism: $\ell^p$, $L^p[0,1]$, $L^p[0,1]\oplus_p\ell^p_n$, $L^p[0,1]\oplus_p\ell^p$, where $\ell^p_n$ denotes $\mathbb{C}^n$ with the $\ell^p$ norm.
\item Up to non-isometric isomorphism: $\ell^p$, $L^p[0,1]$.
\end{itemize}
Different choices of $E$ may result in non-isomorphic Banach algebras. For example, if $X$ is a point, then using $\ell^p$ in the definition results in $B^p(X)=K(\ell^p)$, while using $L^p[0,1]$ in the definition results in $B^p(X)=K(L^p[0,1])$, and these two Banach algebras are non-isomorphic.
\end{remark}

The domain of the $\ell^p$ coarse Baum-Connes assembly map involves the $K$-theory of localization algebras of Rips complexes of the metric space. We will now define these notions and formulate the $\ell^p$ coarse Baum-Connes conjecture.

\begin{definition} \label{DefLoc}
Let $X$ be a proper metric space, and let $\CC^p[X]$ be its algebraic Roe algebra. Let $\CC^p_L[X]$ be the algebra of bounded, uniformly continuous functions $f:[0,\infty)\rightarrow\CC^p[X]$ such that $\prop(f(t))\rightarrow 0$ as $t\rightarrow\infty$. Equip $\CC^p_L[X]$ with the norm
\[ ||f||:=\sup_{t\in[0,\infty)}||f(t)||_{B^p(X)}. \]
The completion of $\CC^p_L[X]$ under this norm, denoted by $B^p_L(X)$, is the $\ell^p$ localization algebra of $X$.
\end{definition}

\begin{definition}
Let $(X,d)$ be a bounded geometry metric space, and let $R>0$. The Rips complex of $X$ at scale $R$, denoted $P_R(X)$, is the simplicial complex with vertex set $X$ and such that a finite set $\set{x_1,\ldots,x_n}\subseteq X$ spans a simplex if and only if $d(x_i,x_j)\leq R$ for all $i,j=1,\ldots,n$.

Equip $P_R(X)$ with the spherical metric defined by identifying each $n$-simplex with the part of the $n$-sphere in the positive orthant, and equipping $P_R(X)$ with the associated length metric.
\end{definition}

For any $R>0$, there is a homomorphism 
\[ i_R:K_*(B^p(P_R(X)))\rightarrow K_*(B^p(X)), \] 
(cf. \cite[Lemma 2.8]{LocAlgBC} for the $p=2$ case)
and the $\ell^p$ coarse Baum-Connes assembly map
\[ \mu:\lim_{R\rightarrow\infty}K_*(B^p_L(P_R(X)))\rightarrow K_*(B^p(X)) \]
is defined to be the limit of the composition
\[ K_*(B^p_L(P_R(X)))\stackrel{e_*}{\rightarrow} K_*(B^p(P_R(X)))\stackrel{i_R}{\rightarrow} K_*(B^p(X)), \]
where $e:B^p_L(P_R(X))\rightarrow B^p(P_R(X))$ is the evaluation-at-zero map.

The $\ell^p$ coarse Baum-Connes conjecture for a proper metric space $X$ with bounded geometry is the statement that $\mu$ is an isomorphism.

It was shown in \cite[Proposition 5.20]{ZhangZhou} that the left-hand side of the $\ell^p$ coarse Baum-Connes assembly map can be naturally identified with that of the original coarse Baum-Connes assembly map.

\subsection{Equivariant assembly maps}

We will also require equivariant versions of the Roe algebra, localization algebra, and assembly map. This subsection contains all the necessary information about these.

\begin{definition} \label{DefEquivRoe}
Let $X$ be a proper metric space, and let $\Gamma$ be a countable discrete group acting freely and properly on $X$ by isometries. Fix a $\Gamma$-invariant countable dense subset $Z\subseteq X$ and define $\CC^p[X]$ as above. The equivariant algebraic Roe algebra of $X$, denoted $\CC^p[X]^\Gamma$, is the subalgebra of $\CC^p[X]$ consisting of matrices $(T_{x,y})_{x,y\in Z}$ satisfying $T_{gx,gy}=T_{x,y}$ for all $g\in\Gamma$ and $x,y\in Z$. The equivariant $\ell^p$ Roe algebra of $X$, denoted $B^p(X)^\Gamma$, is the closure of $\CC^p[X]^\Gamma$ in $B(\ell^p(Z,\ell^p))$.

The equivariant $\ell^p$ localization algebra $B^p_L(X)^\Gamma$ is defined by considering the algebra of bounded, uniformly continuous functions $f:[0,\infty)\rightarrow\CC^p[X]^\Gamma$ such that $\prop(f(t))\rightarrow 0$ as $t\rightarrow\infty$, and completing in the norm
\[ ||f||:=\sup_{t\in[0,\infty)}||f(t)||_{B^p(X)^\Gamma}. \]
\end{definition}

If $\Gamma$ is a discrete group, then we may represent the group ring $\CC\Gamma$ on $\ell^p(\Gamma)$ by left translation. Its completion, which we denote by $B^p_r(\Gamma)$, is the reduced $L^p$ group algebra of $\Gamma$, also known as the algebra of $p$-pseudofunctions on $\Gamma$ in the literature. When $p=2$, it is the reduced group $C^\ast$-algebra.

Just as in the $p=2$ case, the equivariant $\ell^p$ Roe algebra of $X$ as defined above is related to the reduced $L^p$ group algebra of $\Gamma$. Before making this precise, we recall some facts about $L^p$ tensor products, details of which can be found in \cite[Chapter 7]{DF}.

For $p\in[1,\infty)$, there is a tensor product of $L^p$ spaces such that we have a canonical isometric isomorphism $L^p(X,\mu)\otimes L^p(Y,\nu)\cong L^p(X\times Y,\mu\times\nu)$, which identifies, for every $\xi\in L^p(X,\mu)$ and $\eta\in L^p(Y,\nu)$, the element $\xi\otimes\eta$ with the function $(x,y)\mapsto\xi(x)\eta(y)$ on $X\times Y$. Moreover, we have the following properties:
\begin{itemize}
\item Under the identification above, the linear span of all $\xi\otimes\eta$ is dense in $L^p(X\times Y,\mu\times\nu)$.
\item $||\xi\otimes\eta||_p=||\xi||_p||\eta||_p$ for all $\xi\in L^p(X,\mu)$ and $\eta\in L^p(Y,\nu)$.
\item The tensor product is commutative and associative.
\item If $a\in B(L^p(X_1,\mu_1),L^p(X_2,\mu_2))$ and $b\in B(L^p(Y_1,\nu_1),L^p(Y_2,\nu_2))$, then there exists a unique $c\in B(L^p(X_1\times Y_1,\mu_1\times\nu_1),L^p(X_2\times Y_2,\mu_2\times\nu_2))$ such that under the identification above, $c(\xi\otimes\eta)=a(\xi)\otimes b(\eta)$ for all $\xi\in L^p(X_1,\mu_1)$ and $\eta\in L^p(Y_1,\nu_1)$. We will denote this operator by $a\otimes b$. Moreover, $||a\otimes b||=||a|| ||b||$.
\item The tensor product of operators is associative, bilinear, and satisfies $(a_1\otimes b_1)(a_2\otimes b_2)=a_1a_2\otimes b_1b_2$.
\end{itemize}
If $A\subseteq B(L^p(X,\mu))$ and $B\subseteq B(L^p(Y,\nu))$ are norm-closed subalgebras, we then define $A\otimes B\subseteq B(L^p(X\times Y,\mu\times\nu))$ to be the closed linear span of all $a\otimes b$ with $a\in A$ and $b\in B$.

Regarding $M_n(\CC)$ as $B(\ell^p_n)$, we may view $M_n(A)$ as $M_n(\CC)\otimes A$ when $A$ is an $L^p$ operator algebra and the tensor product is as described above (see Remark 1.14 and Example 1.15 in \cite{Phil13}). Writing $\overline{M^p_\infty}$ for $\overline{\bigcup_{n\in\NN}M_n(\CC)}^{B(\ell^p)}$, we see that $\overline{M^p_\infty}$ is a closed subalgebra of $B(\ell^p)$. Let $P_n$ be the projection onto the first $n$ coordinates with respect to the standard basis in $\ell^p$. When $p\in(1,\infty)$, we have $\lim_{n\rightarrow\infty}||a-P_n aP_n||=0$ for any compact operator $a\in K(\ell^p)$. It follows that $\overline{M^p_\infty}=K(\ell^p)$ for $p\in(1,\infty)$. However, when $p=1$, we can only say that $\lim_{n\rightarrow\infty}||a-P_n a||=0$ for $a\in K(\ell^1)$. In fact, there is a rank one operator on $\ell^1$ that is not in $\overline{M^1_\infty}$. We refer the reader to Proposition 1.8 and Example 1.10 in \cite{Phil13} for details.

The standard proof in the $p=2$ case allows one to show that if $A$ is an $L^p$ operator algebra for some $p\in[1,\infty)$, then
\[ K_*(\overline{M^p_\infty}\otimes A)\cong K_*(A). \]
In particular, when $p\in(1,\infty)$, we have \[ K_*(K(\ell^p)\otimes A)\cong K_*(A). \]
We refer to \cite[Lemma 6.6]{Phil13} for details.

The following lemma is well-known when $p=2$ (cf. \cite[Lemma 3.7]{WillettYuI}).

\begin{lemma} \label{equivariant}
Let $\Gamma$ be a discrete group acting freely, properly, and cocompactly by isometries on a proper metric space $X$. Let $Z\subseteq X$ be the countable dense $\Gamma$-invariant subset used to define $\CC[X]^\Gamma$. Let $D\subseteq Z$ be a precompact fundamental domain for the $\Gamma$-action on $Z$. 
Then for $p\in(1,\infty)$ there is an isomorphism
\[ \psi_D:B^p(X)^\Gamma\rightarrow B^p_r(\Gamma)\otimes K(\ell^p(D,\ell^p)). \]
Moreover, the induced isomorphism on $K$-theory is independent of the choice of $D$.
\end{lemma}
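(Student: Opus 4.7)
The plan is to mimic the $p=2$ argument of \cite[Lemma 3.7]{WillettYuI}, tracking the $L^p$ tensor-product machinery laid out in the preliminaries. The first step is to use the precompact fundamental domain to produce the bijection $\Gamma \times D \to Z$, $(g,d) \mapsto gd$, which yields a canonical isometric identification
\[ \ell^p(Z,\ell^p) \;\cong\; \ell^p(\Gamma) \otimes \ell^p(D,\ell^p). \]
Any bounded operator on the left is literally the same operator on the right under this identification; the map $\psi_D$ will be exactly this relabeling, and the real content of the lemma is to characterize which operators on the right correspond to $B^p(X)^\Gamma$.

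For $T = (T_{x,y})_{x,y \in Z} \in \CC^p[X]^\Gamma$, I would use $\Gamma$-invariance to define, for each $h \in \Gamma$, a matrix $T^{(h)}$ on $\ell^p(D,\ell^p)$ by $(T^{(h)})_{d_1,d_2} = T_{d_1,\,h d_2}$. A straightforward computation gives
\[ \psi_D(T) \;=\; \sum_{h \in \Gamma} \rho_h \otimes T^{(h)}, \]
where $\rho_h$ is the right regular representation. Finite propagation together with properness of the action forces the sum to be finite, and local compactness applied to the bounded set $D \cup hD$ shows each $T^{(h)}$ has only finitely many nonzero entries, each compact on $\ell^p$, so $T^{(h)} \in K(\ell^p(D,\ell^p))$. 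Thus $\psi_D$ embeds $\CC^p[X]^\Gamma$ isometrically into $\CC\Gamma \odot K(\ell^p(D,\ell^p)) \subseteq B^p_r(\Gamma) \otimes K(\ell^p(D,\ell^p))$.

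Extending by continuity gives an isometric embedding $B^p(X)^\Gamma \hookrightarrow B^p_r(\Gamma) \otimes K(\ell^p(D,\ell^p))$. For surjectivity I would verify density: an element of the form $\rho_h \otimes (e_{d_1,d_2} \otimes k)$ with $k \in K(\ell^p)$ is the image under $\psi_D$ of the $T \in \CC^p[X]^\Gamma$ defined by $T_{g d_1,\,g h d_2} = k$ for all $g \in \Gamma$ and all other entries zero. Density of finite matrices with compact entries inside $K(\ell^p(D,\ell^p))$ (using $\overline{M^p_\infty} = K(\ell^p)$ for $p \in (1,\infty)$ from the preliminaries) together with density of $\CC\Gamma$ inside $B^p_r(\Gamma)$ completes the surjectivity. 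For the $K$-theory independence of $D$, two fundamental domains $D_1,D_2$ are related by a $\Gamma$-valued \emph{permutation} (each $\Gamma$-orbit meets each $D_i$ exactly once), inducing an inner invertible isometry on the target that intertwines $\psi_{D_1}$ and $\psi_{D_2}$; inner automorphisms are trivial on $K$-theory, and combining this with the canonical stability isomorphism $K_*(\overline{M^p_\infty} \otimes A) \cong K_*(A)$ recalled in the preliminaries shows that the induced $K$-theory maps agree.

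The main technical obstacle I anticipate is verifying that the ambient operator norm on $B(\ell^p(\Gamma) \otimes \ell^p(D,\ell^p))$, restricted to the algebraic tensor product, agrees with the spatial $L^p$ tensor norm used to define $B^p_r(\Gamma) \otimes K(\ell^p(D,\ell^p))$. In the $C^*$-setting this is automatic via spatial uniqueness of the tensor norm, but for $p \neq 2$ it has to be extracted by hand from the multiplicativity property $\|a \otimes b\| = \|a\|\|b\|$ of the $L^p$ tensor product, using that both sides arise from the same underlying representation. A secondary subtlety is the $K$-theory independence clause, where the natural intertwiner is canonical only up to stabilization and one must lean on the canonicity of the $L^p$ stabilization isomorphism in $K$-theory.
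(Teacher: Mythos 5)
Your proposal is correct and takes essentially the same route as the paper: the paper also defines $T^{(g)}_{x,y}=T_{x,gy}$, writes $\psi_D(T)=\sum_g \lambda_g\odot T^{(g)}$, observes that $\psi_D$ is implemented by conjugation by the isometric isomorphism $\ell^p(Z,\ell^p)\cong\ell^p(\Gamma)\otimes\ell^p(D,\ell^p)$ (so it extends isometrically to completions, with surjectivity coming from density of $\CC\Gamma\odot K_f(\ell^p(D,\ell^p))$), and handles independence of $D$ exactly as you do, by conjugation by an invertible multiplier, which is trivial on $K$-theory. Two minor points: your bare relabeling $(g,d)\mapsto gd$ yields the right regular representation, so you should compose with the flip $g\mapsto g^{-1}$ (built into the paper's isometry $U\xi=\sum_g\delta_g\otimes\chi_D U_g\xi$) to land in $B^p_r(\Gamma)$, and your anticipated norm-compatibility obstacle is vacuous, since the paper's $L^p$ tensor product of concrete operator algebras is defined spatially as the closed span of elementary tensors inside $B(\ell^p(\Gamma)\otimes\ell^p(D,\ell^p))$, so the ambient operator norm is the tensor-product norm by definition.
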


\begin{proof}
Let $K_f(\ell^p(D,\ell^p))$ be the dense subalgebra of $K(\ell^p(D,\ell^p))$ consisting of those operators $(K_{x,y})_{x,y\in D}$ with only finitely many nonzero matrix entries.
For $T\in\CC^p[X]^\Gamma$ and $g\in\Gamma$, define an element $T^{(g)}\in K_f(\ell^p(D,\ell^p))$ by the matrix formula
\[ T^{(g)}_{x,y}:=T_{x,gy}\;\text{for all}\;x,y\in D. \]
Define a homomorphism
\[ \psi_D:\CC^p[X]^\Gamma\rightarrow\CC\Gamma\odot K_f(\ell^p(D,\ell^p)) \]
by the formula
\[ T\mapsto\sum_{g\in\Gamma}\lambda_g\odot T^{(g)}. \]
Note that only finitely many $T^{(g)}$ are nonzero since $T$ has finite propagation. Moreover, $\psi_D$ is an isomorphism.

In fact, $\psi_D$ is implemented by conjugating $T$ by the isometric isomorphism 
\[U:\ell^p(Z,\ell^p)\rightarrow\ell^p(\Gamma)\otimes\ell^p(D,\ell^p), \quad\xi\mapsto \sum_{g\in\Gamma}\delta_g\otimes \chi_D U_g\xi,\] 
i.e., $\psi_D(T)=UTU^{-1}$, and so $\psi_D$ extends to an isometric isomorphism between the completions.

If $D'\subseteq Z$ is another precompact fundamental domain for the $\Gamma$-action, then $\psi_D(T)$ and $\psi_{D'}(T)$ differ by conjugation by an invertible multiplier of $B^p_r(\Gamma)\otimes K(\ell^p(D',\ell^p))$ (i.e., an element $a\in B(\ell^p(Z,\ell^p))$ such that $B^p_r(\Gamma)\otimes K(\ell^p(D',\ell^p))$ is closed under left multiplication and right multiplication by $a$), which induces the identity map on $K$-theory (cf. \cite[Lemma 4.6.1]{HR}).
\end{proof}

If $\Gamma$ is a countable discrete group acting freely and properly on $X$ by isometries, then by considering the equivariant versions of the localization algebra and the Roe algebra, we have the $\ell^p$ equivariant assembly map
\[ \lim_{R\rightarrow\infty}K_*(B^p_L(P_R(X))^\Gamma)\rightarrow K_*(B^p(X)^\Gamma)\cong K_*(B^p_r(\Gamma)), \]
which is (a model for) the Baum-Connes assembly map for $\Gamma$ when $p=2$ \cite{Shan}.

As discussed in \cite[Section 6.2]{Chung2}, the domain of the $\ell^p$ equivariant assembly map can be identified with that of the classical Baum-Connes assembly map, similar to how it is done for the coarse assembly map in \cite[Section 5]{ZhangZhou}.

Using involutive versions of the $\ell^p$ Roe algebra and localization algebra, we can further establish a relationship between the $\ell^p$ equivariant assembly map and the classical Baum-Connes assembly map for a given group.
We shall use some terminology from \cite[Section 5]{ZhangZhou} for this purpose.

Given $p\in(1,\infty)$, and countable discrete measure spaces $Z$ and $Z'$, the Banach spaces $\ell^p(Z)$ and $\ell^p(Z')$ have canonical Schauder bases $(e_i)_{i\in Z}$ and $(e_i')_{i\in Z'}$ respectively. Any bounded linear operator $T\in B(\ell^p(Z),\ell^p(Z'))$ may be viewed as an infinite matrix with respect to the Schauder bases. We may then consider the conjugate transpose matrix $T^*$. 

We say that $T\in B(\ell^p(Z),\ell^p(Z'))$ is a dual-operator if $T^*$ is a bounded operator from $\ell^p(Z')$ to $\ell^p(Z)$.
In this case, $T$ may be regarded as a bounded linear operator from $\ell^q(Z)$ to $\ell^q(Z')$ with $1/p+1/q=1$.
We say that $T$ is a compact dual-operator if $T$ and $T^*$ are compact operators from $\ell^p(Z)$ to $\ell^p(Z')$, and from $\ell^p(Z')$ to $\ell^p(Z)$, respectively.
Given a dual-operator $T$, we define its maximal norm by $||T||_{max}=\max(||T||_{B(\ell^p(Z),\ell^p(Z'))},||T^*||_{B(\ell^p(Z'),\ell^p(Z))})$.

One can now consider the following definition analogous to Definition \ref{DefRoe} and Definition \ref{DefEquivRoe}.

\begin{definition}
Let $X$ be a proper metric space, and fix a countable dense subset $Z\subseteq X$. A dual-operator $T=(T_{x,y})_{x,y\in Z}\in B(\ell^p(Z,\ell^p))$ is said to be locally compact if
\begin{itemize}
\item each $T_{x,y}$ is a compact dual-operator on $\ell^p$;
\item for every bounded subset $B\subseteq X$, the set \[\{(x,y)\in (B\times B)\cap(Z\times Z):T_{x,y}\neq 0\}\] is finite.
\end{itemize}
The dual $\ell^p$ Roe algebra of $X$, denoted by $B^{p,*}(X)$, is the maximal norm closure of the algebra of all locally compact dual-operators on $\ell^p(Z,\ell^p)$ with finite propagation.

Let $\Gamma$ be a countable discrete group acting freely and properly on $X$ by isometries. 
The equivariant dual $\ell^p$ Roe algebra, denoted by $B^{p,*}(X)^\Gamma$, is the maximal norm closure of the algebra of all locally compact dual-operators on $\ell^p(Z,\ell^p)$ with finite propagation and satisfying $T_{gx,gy}=T_{x,y}$ for all $g\in\Gamma$ and $x,y\in Z$.
\end{definition}

The completion of the group ring $\CC\Gamma$ in the maximal norm \[||f||_{max}=\max(||f||_{B(\ell^p(\Gamma))},||f^*||_{B(\ell^p(\Gamma))}),\] where $f^*(g)=\overline{f(g^{-1})}$, is the involutive version of the reduced $L^p$ group algebra of $\Gamma$, and is denoted by $B^{p,*}(\Gamma)$. One obtains the involutive version of Lemma \ref{equivariant} with essentially the same proof.

The dual $\ell^p$ localization algebra $B^{p,*}_L(X)$ and the equivariant dual $\ell^p$ localization algebra $B^{p,*}_L(X)^\Gamma$ are defined in the obvious manner analogous to Definition \ref{DefLoc} and Definition \ref{DefEquivRoe}.

One immediately sees that there are inclusion homomorphisms
\begin{align*}
B^{p,*}(X)^\Gamma &\rightarrow B^p(X)^\Gamma, \\
B^{p,*}_L(X)^\Gamma &\rightarrow B^p_L(X)^\Gamma.
\end{align*}
Moreover, using the Riesz-Thorin interpolation theorem, one obtains contractive homomorphisms
\begin{align*}
B^{p,*}(X)^\Gamma &\rightarrow C^*(X)^\Gamma=B^2(X)^\Gamma, \\
B^{p,*}_L(X)^\Gamma &\rightarrow C^*_L(X)^\Gamma=B^2_L(X)^\Gamma.
\end{align*}
When $X$ is a finite-dimensional simplicial complex, one can show that the homomorphisms between the localization algebras induce isomorphisms on $K$-theory as outlined in \cite[Section 6.2]{Chung2} (cf. \cite[Propositions 5.18 and 5.19]{ZhangZhou} for the non-equivariant case).

\begin{proposition}
If the Baum-Connes assembly map for $\Gamma$ is injective, and the inclusion $B^{p,*}_r(\Gamma)\rightarrow B^p_r(\Gamma)$ induces an injection on $K$-theory, then the $\ell^p$ Baum-Connes assembly map for $\Gamma$ is injective.
\end{proposition}

\begin{proof}
The result follows immediately from the following commutative diagram of assembly maps:
\[
\begin{CD}
\lim_{R\rightarrow\infty}K_*(B^p_L(P_R(X))^\Gamma)	@>e_*>>	K_*(B^p(X)^\Gamma) \cong K_*(B^p_r(\Gamma))		 \\
@A\cong AA  @AAA \\
\lim_{R\rightarrow\infty}K_*(B^{p,*}_L(P_R(X))^\Gamma)	@>e_*>>	K_*(B^{p,*}(X)^\Gamma) \cong K_*(B^{p,*}_r(\Gamma))	 \\
@V\cong VV		@VVV \\
\lim_{R\rightarrow\infty}K_*(C^*_L(P_R(X))^\Gamma)	@>e_*>>	K_*(C^*(X)^\Gamma) \cong K_*(C^*_r(\Gamma))
\end{CD}
\]
The bottom horizontal arrow is the Baum-Connes assembly map for $\Gamma$.
The top horizontal arrow is the $\ell^p$ Baum-Connes assembly map for $\Gamma$.
The upper vertical arrow on the right is induced by inclusion, and the lower vertical arrow on the right is induced by complex interpolation.
\end{proof}

In \cite[Definition 4.1]{LY}, Liao and Yu introduced a Banach version $(RD)_q$ of the rapid decay property for groups, and considered the question of when the inclusion $B^{p,*}_r(\Gamma)\rightarrow B^p_r(\Gamma)$ induces an isomorphism on $K$-theory.

\begin{proposition} \cite[Corollary 4.9]{LY}
Let $q_0\in[1,\infty]$, $q\in[1,q_0]$, and $1/p+1/q=1$. If $\Gamma$ has property $(RD)_{q_0}$, then the inclusion $B^{p,*}_r(\Gamma)\rightarrow B^p_r(\Gamma)$ induces an isomorphism on $K$-theory.
\end{proposition}

By \cite[Theorem 4.4]{LY}, the result applies to all groups with property RD.
Moreover, groups with property RD in Lafforgue's class $\mathcal{C}'$ satisfy the Baum-Connes conjecture \cite[Corollaire 0.0.4]{Laf02}. 

\begin{corollary} \label{LpBC}
The $\ell^p$ Baum-Connes assembly map is injective for groups with property RD in Lafforgue's class $\mathcal{C}'$.
\end{corollary}

Lafforgue's class $\mathcal{C}'$ includes groups acting properly and isometrically on a strongly bolic, weakly geodesic, uniformly locally finite metric space, which by \cite{Mineyev-Yu} includes hyperbolic groups (and their subgroups).
Hyperbolic groups also have property RD \cite{de la Harpe}.
Thus the $\ell^p$ Baum-Connes assembly map is injective for hyperbolic groups.

Finally, we record an induction isomorphism from the $K$-theory of the localization algebra of a metric space to the $K$-theory of the equivariant localization algebra of a covering space.
For the proof of this result, we have taken the approach in the proof of \cite[Lemma 12.5.4]{HR} together with material from our appendix but we note that the proof of \cite[Theorem 6.5.15]{WY} can also be adapted to obtain the result. 

\begin{definition}
Let $X$ and $\tilde{X}$ be metric spaces, and let $\pi:\tilde{X}\rightarrow X$ be a surjective map. Let $\varepsilon>0$. Then $(\tilde{X},\pi)$ is called an $\varepsilon$-metric cover of $X$ if for all $x\in\tilde{X}$, the restriction of $\pi$ to the open ball $B(x,\varepsilon)$ of radius $\varepsilon$ around $x$ in $\tilde{X}$ is an isometry onto the open ball $B(\pi(x),\varepsilon)$ of radius $\varepsilon$ around $\pi(x)$ in $X$.
\end{definition}

For example, if $X$ is a length space (see \cite[Definition I.3.1]{BH}), and the metric on $\tilde{X}$ is the one canonically induced by $\pi$, then $(\tilde{X},\pi)$ is an $\varepsilon$-metric cover for some $\varepsilon>0$ \cite[Proposition I.3.25]{BH}.

\begin{proposition} \label{ind}
Let $\pi:\tilde{Y}\rightarrow Y$ be a regular (or Galois) cover with deck transformation group $\Gamma$, and suppose $(\tilde{Y},\pi)$ is an $\varepsilon$-metric cover for some $\varepsilon>0$.
Then we have an induction isomorphism \[ind:K_*(B^p_L(Y))\stackrel{\cong}{\rightarrow} K_*(B^p_L(\tilde{Y})^\Gamma).\]
\end{proposition}

\begin{proof}
We prove that there is an isomorphism \[D^p_L(Y)/B^p_L(Y) \stackrel{\cong}{\rightarrow} D^p_L(\tilde{Y})^\Gamma/B^p_L(\tilde{Y})^\Gamma,\]
referring the reader to Definition \ref{def:dpl} for the definition of $D^p_L(Y)$.
By Proposition \ref{PropDLtrivialK} and its equivariant analog, we then get an isomorphism \[K_*(B^p_L(Y))\stackrel{\cong}{\rightarrow} K_*(B^p_L(\tilde{Y})^\Gamma).\]

Let $\varepsilon>0$ be such that $(\tilde{Y},\pi)$ is an $\varepsilon$-metric cover. 
Let $T$ be a bounded linear operator on $\ell^p(Z,\ell^p)$ that has propagation less than $\varepsilon/2$, where $Z$ is a countable dense subset of $Y$ containing a finite net $\{z_i\}_{i=1}^N$ in $Y$ with bounded geometry.
Consider a cover $\mathcal{U}=\set{U_i}_{i=1}^N$ of $Y$ by uniformly bounded, pairwise disjoint Borel sets of diameter less than $\varepsilon/2$ such that $z_i\in U_i$ for each $i\in I$, and let $\chi_i$ be the characteristic function of $U_i$. Define 
$$T_{i,j} = \chi_i T\chi_j: \chi_j \ell^p(Z,\ell^p)=\ell^p(U_j\cap Z,\ell^p) \to \ell^p(U_i\cap Z,\ell^p)=\chi_i \ell^p(Z,\ell^p).$$
The cover $\mathcal{U}$ can be lifted to a cover $\widetilde{\mathcal{U}}$ of the covering space $\widetilde{Y}$ by setting $\widetilde{U}_i=\pi^{-1}(U_i)$.
Then $\widetilde{U}_i=\bigsqcup_{g\in G} U_{i,g} \cong U_i\times \Gamma$, where $\pi$ restricted to each $U_{i,g}$ is an isometry onto $U_i$.

This allows us to lift the operator $T_{i,j}$ to the operator
$$\widetilde{T_{i,j}} :\chi_{j,h}(\ell^p(\Gamma)\otimes\ell^p(Z)\otimes\ell^p) \to \chi_{i,g}(\ell^p(\Gamma)\otimes\ell^p(Z)\otimes\ell^p),$$
where $\chi_{i,g}$ is the characteristic function of $U_{i,g}$, i.e., $\widetilde{T_{i,j}}$ may be identified with $T_{i,j}$ via the following diagram:
\[
\begin{CD}
\chi_{j,h}(\ell^p(\Gamma)\otimes\ell^p(Z)\otimes\ell^p)	@>\widetilde{T_{i,j}}>>	\chi_{i,g}(\ell^p(\Gamma)\otimes\ell^p(Z)\otimes\ell^p) \\
@|  				@|	\\
\ell^p(\{h\})\otimes\ell^p(U_j\cap Z,\ell^p)		@>T_{i,j}>>	\ell^p(\{g\})\otimes\ell^p(U_i\cap Z,\ell^p)
\end{CD}
\]

Let now $\mathcal{L}_{\delta}[Y]$ denote the set of bounded linear operators on $\ell^p(Z,\ell^p)$ of propagation less than $\delta$; 
let also $\mathcal{L}_{\delta}[\widetilde{Y}]^\Gamma$ denote the set of bounded, $\Gamma$-equivariant operators on $\ell^p(\Gamma)\otimes\ell^p(Z)\otimes\ell^p$ of propagation less than $\delta$.

For an operator $T\in \mathcal{L}_{\varepsilon/2}[Y]$, define its lift $\phi_L(T)$ by the formula
$$\phi_L(T)_{(i,g), (j,h)}=\left\lbrace \begin{array}{ll} \widetilde{T_{i,j}} & d(U_{i,g}, U_{j,h}) <\frac{\varepsilon}{2}\\
0 &\text{otherwise}\end{array}\right.$$
Then $\phi_L(T) \in \mathcal{L}_{\varepsilon/2}[\widetilde{Y}]^\Gamma$.
This lifting process gives a one-to-one correspondence between operators in $\mathcal{L}_{\varepsilon/2}[Y]$ and operators in $\mathcal{L}_{\varepsilon/2}[\widetilde{Y}]^\Gamma$. It also preserves the properties of local compactness and pseudolocality.

Note that if $S,T\in\mathcal{L}_{\varepsilon/2}[Y]$ are such that $ST\in\mathcal{L}_{\varepsilon/2}[Y]$, then $\phi_L(ST)=\phi_L(S)\phi_L(T)$. Also, by the definition of $\phi_L$, if $f:[0,\infty)\rightarrow\mathcal{L}_{\varepsilon/2}[Y]$ is a bounded, uniformly continuous function, then so is $\phi_L\circ f:[0,\infty)\rightarrow\mathcal{L}_{\varepsilon/2}[\widetilde{Y}]^\Gamma$, and thus $\phi_L\circ f\in B^p_L(\tilde{Y})^\Gamma$ if $f\in B^p_L(Y)$, and similarly for the $D^p_L$ algebras. 
Indeed, this follows from the fact that since $\widetilde{Y}$ contains a net with bounded geometry $\{z_{i,g}\}_{1\leq i\leq N,g\in\Gamma}$ such that $z_{i,g}\in U_{i,g}$, if $T\in\mathcal{L}_{\delta}[\widetilde{Y}]$, then there exists $c_\delta>0$ such that \[||\phi_L(T)||\leq c_\delta\sup_{(i,g),(j,h)}||\phi_L(T)_{(i,g),(j,h)}||=c_\delta\sup_{i,j}||T_{i,j}||\]
(see \cite[Lemma 2.6]{OYmax} for the $p=2$ case and note that the same reasoning works for all $p$).

By Corollary \ref{CorFinProp}, every element in $D^p_L(Y)/B^p_L(Y)$ can be represented by an element in $D^p_L(Y)$ with arbitrarily small propagation so the lifting procedure above gives the desired isomorphism $D^p_L(Y)/B^p_L(Y) \stackrel{\cong}{\rightarrow} D^p_L(\tilde{Y})^\Gamma/B^p_L(\tilde{Y})^\Gamma$.
\end{proof}

\section{$p$-operator norm localization and a lifting homomorphism}

In this section, we introduce the $p$-operator norm localization property, which was defined for $p=2$ in \cite{ONL}.
It enables us to get a bounded lifting homomorphism on the Roe algebra of the box space of a residually finite group.

\begin{definition}
Let $(X,\nu)$ be a metric space equipped with a positive locally finite Borel measure $\nu$, let $p\in[1,\infty)$, and let $E$ be an infinite-dimensional Banach space. Let $f:\NN\rightarrow\NN$ be a non-decreasing function. We say that $X$ has the $p$-operator norm localization property relative to $f$ (and $E$) with constant $0<c\leq 1$ if for all $r\in\NN$ and every $T\in B(L^p(X,\nu;E))$ with $\prop(T)\leq r$, there exists a nonzero $\xi\in L^p(X,\nu;E)$ satisfying
\begin{enumerate}
\item $\mathrm{diam}(\supp(\xi))\leq f(r)$,
\item $||T\xi||\geq c||T|| ||\xi||$.
\end{enumerate}
\end{definition}

\begin{definition}
A metric space $X$ is said to have the $p$-operator norm localization property if there exists a constant $0<c\leq 1$ and a non-decreasing function $f:\NN\rightarrow\NN$ such that for every positive locally finite Borel measure $\nu$ on $X$, $(X,\nu)$ has the $p$-operator norm localization property relative to $f$ with constant $c$.
\end{definition}

\begin{remark}  (cf. \cite[Proposition 2.4]{ONL})
If a metric space has the $p$-operator norm localization property, then it has the property with constant $c$ for all $0<c<1$.
\end{remark}

It was shown in \cite[Proposition 4.1]{ONL} that the metric sparsification property implies the 2-operator norm localization property. For metric spaces with bounded geometry, it was shown in \cite{ULA} that property A implies the metric sparsification property, and it was shown in \cite{Sako} that property A is equivalent to the 2-operator norm localization property.

As noted in \cite[Section 7]{WillettSpakula}, where a similar property called lower norm localization was considered, the proof of \cite[Proposition 4.1]{ONL} can be adapted with the obvious modifications to yield the following.

\begin{proposition} \label{MSP}
The metric sparsification property implies the $p$-operator norm localization property for every $p\in[1,\infty)$.
\end{proposition}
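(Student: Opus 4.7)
The plan is to adapt the proof of \cite[Proposition 4.1]{ONL}, which treats $p=2$: the only Hilbertian ingredient there is the additivity $\|\sum_i \eta_i\|_2^2 = \sum_i \|\eta_i\|_2^2$ for disjointly supported vectors, and this is replaced by the identity $\|\sum_i \eta_i\|_p^p = \sum_i \|\eta_i\|_p^p$ which holds in any $L^p(X,\nu;E)$ for every $p \in [1,\infty)$.

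Fix $p \in [1,\infty)$. To verify the $p$-operator norm localization property with some target constant $c' \in (0,1)$, first choose $c_1 \in (c',1)$ and the MSP mass parameter $c \in (0,1)$ close enough to $1$ so that $c_1 - (1-c)^{1/p} \geq c'$, and let $f$ be the diameter-control function that MSP supplies for this $c$. Now suppose $\nu$ is a positive locally finite Borel measure on $X$ and $T \in B(L^p(X,\nu;E))$ has $\prop(T) \leq r$. Pick a unit vector $\xi_0 \in L^p(X,\nu;E)$ with $\|T\xi_0\| \geq c_1 \|T\|$, define a probability measure $\mu$ on $X$ by $d\mu = \|\xi_0(\cdot)\|_E^p \, d\nu$, and apply MSP to $(X,\mu)$ at scale $2r$ to obtain a Borel subset $\Omega \subseteq X$ with $\mu(\Omega) \geq c$ and a decomposition $\Omega = \bigsqcup_i \Omega_i$ satisfying $\mathrm{diam}(\Omega_i) \leq f(2r)$ and $d(\Omega_i,\Omega_j) > 2r$ for $i \neq j$.

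The propagation bound forces each $T\chi_{\Omega_i}\xi_0$ to be supported in the closed $r$-neighborhood of $\Omega_i$, and the $2r$-separation makes these neighborhoods pairwise disjoint, so
\[ \|T\chi_\Omega \xi_0\|^p = \sum_i \|T\chi_{\Omega_i}\xi_0\|^p, \qquad \sum_i \|\chi_{\Omega_i}\xi_0\|^p = \mu(\Omega) \leq 1. \]
The triangle inequality together with $\|\chi_{\Omega^c}\xi_0\|^p = 1 - \mu(\Omega) \leq 1-c$ gives
\[ \|T\chi_\Omega \xi_0\| \geq \|T\xi_0\| - \|T\|\cdot\|\chi_{\Omega^c}\xi_0\| \geq \bigl(c_1 - (1-c)^{1/p}\bigr)\|T\|. \]
Combining these two displays via the elementary inequality $\max_i (a_i/b_i) \geq (\sum_i a_i)/(\sum_i b_i)$ for non-negative reals with positive denominator sum, one obtains an index $i_0$ such that $\|T\chi_{\Omega_{i_0}}\xi_0\| \geq c'\|T\|\cdot\|\chi_{\Omega_{i_0}}\xi_0\|$. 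Taking $\xi = \chi_{\Omega_{i_0}}\xi_0$ and $f'(r) = \lceil f(2r) \rceil$ then exhibits the $p$-operator norm localization property with constant $c'$ and function $f'$.

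The $p$-dependence enters only through the disjoint-support additivity above and through the bound $\|\chi_{\Omega^c}\xi_0\| \leq (1-c)^{1/p}$, both of which are immediate from the definition of the $L^p$-norm of an $E$-valued function; in particular, the argument is insensitive to which Banach space $E$ is used. I do not anticipate a real obstacle: the substantive sanity checks are that $\mu$ is a legitimate finite positive Borel measure to which the metric-space-level MSP applies (which is automatic), and that the separation factor $2$ is enough to decouple the images $T\chi_{\Omega_i}\xi_0$ (which follows from the propagation bound). Everything else is formally identical to the Hilbertian case, which is precisely why the authors describe the modifications as obvious.
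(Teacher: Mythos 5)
Your proof is correct and is precisely the adaptation of \cite[Proposition 4.1]{ONL} that the paper itself invokes: the only change from the $p=2$ argument is replacing additivity of squared norms of disjointly supported vectors by $p$-th power additivity, together with the bound $\|\chi_{\Omega^c}\xi_0\|\leq(1-c)^{1/p}$, which is exactly what the paper means by ``obvious modifications.'' The one implicit ingredient, that the metric sparsification constant can be taken arbitrarily close to $1$, is a standard fact already used in the original $p=2$ proof, so nothing beyond the paper's intended route is needed.
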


It is known from \cite{Roe05} that hyperbolic groups have finite asymptotic dimension. Also, finite asymptotic dimension implies the metric sparsification property by \cite[Remark 3.2]{ONL}.

\begin{corollary} \label{pONLhyp}
Hyperbolic groups have the $p$-operator norm localization property for all $p\in[1,\infty)$.
\end{corollary}

Now we consider a lifting map $\phi$ defined in \cite[Lemma 3.8]{WillettYuI} (and also in \cite[Section 7]{ONL}). 

Let $G$ be a finitely generated, residually finite group with a sequence of normal subgroups of finite index $N_1\supseteq N_2\supseteq \cdots$ such that $\bigcap_i N_i=\{e\}$. Let $\square G=\bigsqcup_i G/N_i$ be the box space, i.e., the disjoint union of the finite quotients $G/N_i$, endowed with a metric $d$ such that its restriction to each $G/N_i$ is the quotient metric, while $d(G/N_i,G/N_j)\geq i+j$ if $i\neq j$.

Let $T\in\CC^p[\square G]$ have propagation $S$, and let $M$ be such that for all $i,j\geq M$, we have $d(G/N_i,G/N_j)\geq 2S$ and $\pi_i:G\rightarrow G/N_i$ is a $2S$-metric cover.
We may then write $T=T^{(0)}\oplus\prod_{i\geq M}T^{(i)}$, where $T^{(0)}\in B(\ell^p(G/N_1\sqcup\cdots\sqcup G/N_{M-1},\ell^p))$, and each $T^{(i)}\in B(\ell^p(G/N_i,\ell^p))$.
For each $i\geq M$, define an operator $\widetilde{T^{(i)}}\in\CC^p[G]^{N_i}$ by 
\[ \widetilde{T^{(i)}}_{x,y}=\begin{cases} T^{(i)}_{\pi_i(x),\pi_i(y)} & \text{if}\;d(x,y)\leq S \\ 0 & \text{otherwise} \end{cases}, \]
and define $\phi(T)$ to be the image of $\prod_{i\geq M}\widetilde{T^{(i)}}$ in $\frac{\prod_i \CC^p[G]^{N_i}}{\bigoplus_i \CC^p[G]^{N_i}}$. This defines a homomorphism
\[ \phi:\CC^p[\square G]\rightarrow\frac{\prod_i \CC^p[G]^{N_i}}{\bigoplus_i \CC^p[G]^{N_i}}. \]

\begin{lemma} \label{phihom}
If $G$ has the $p$-operator norm localization property, then $\phi$ extends to a bounded homomorphism
\[ \phi:B^p(\square G)\rightarrow\frac{\prod_{i=1}^\infty B^p(|G|)^{N_i}}{\bigoplus_{i=1}^\infty B^p(|G|)^{N_i}}. \]
\end{lemma}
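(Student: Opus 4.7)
The plan is to show that $\phi$ is contractive on the dense subalgebra $\CC^p[\square G]$, so that it extends by continuity to a bounded homomorphism on $B^p(\square G)$. The norm on the quotient algebra is given by $\|\phi(T)\|=\limsup_{i\to\infty}\|\widetilde{T^{(i)}}\|$, so the entire task reduces to controlling $\|\widetilde{T^{(i)}}\|$ uniformly in terms of $\|T\|$ for all sufficiently large $i$.

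First I would fix $T\in\CC^p[\square G]$ of propagation $S$ and write $T=T^{(0)}\oplus\prod_{i\geq M}T^{(i)}$ as in the construction of $\phi$. The inequality $\|T^{(i)}\|\leq\|T\|$ is essentially free, since $T^{(i)}$ is the restriction of $T$ to an invariant summand. The real work lies in the reverse direction, bounding $\|\widetilde{T^{(i)}}\|$ by $\|T^{(i)}\|$; this is nontrivial because $\widetilde{T^{(i)}}$ acts on the infinite space $\ell^p(G,\ell^p)$ while $T^{(i)}$ acts on the finite-dimensional $\ell^p(G/N_i,\ell^p)$, and there is no a priori reason for the norms to be comparable.

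Here is where I would invoke the $p$-operator norm localization property. Fix $c\in(0,1)$ and let $f:\NN\to\NN$ be the associated non-decreasing function. Since $\bigcap_i N_i=\{e\}$, the injectivity radius of $\pi_i:G\to G/N_i$ tends to infinity with $i$, so after enlarging $M$ by a finite amount, which does not affect $\phi(T)$ in the quotient, one may assume that $\pi_i$ is an $(f(S)+S)$-metric cover for all $i\geq M$. Applying the localization property to $\widetilde{T^{(i)}}$, which has propagation at most $S$, yields a nonzero $\xi_i\in\ell^p(G,\ell^p)$ with $\mathrm{diam}(\supp\xi_i)\leq f(S)$ and $\|\widetilde{T^{(i)}}\xi_i\|\geq c\|\widetilde{T^{(i)}}\|\,\|\xi_i\|$. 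Both $\supp\xi_i$ and $\supp(\widetilde{T^{(i)}}\xi_i)$ then lie in a common ball of radius $f(S)+S$ on which $\pi_i$ is isometric, so pushing $\xi_i$ forward along $\pi_i$ yields $\xi_i'\in\ell^p(G/N_i,\ell^p)$ with $\|\xi_i'\|=\|\xi_i\|$. The defining relation $\widetilde{T^{(i)}}_{x,y}=T^{(i)}_{\pi_i(x),\pi_i(y)}$ for $d(x,y)\leq S$ then makes $T^{(i)}\xi_i'$ the pushforward of $\widetilde{T^{(i)}}\xi_i$, so $\|T^{(i)}\xi_i'\|=\|\widetilde{T^{(i)}}\xi_i\|$, and chaining,
\[\|T^{(i)}\|\geq\frac{\|T^{(i)}\xi_i'\|}{\|\xi_i'\|}=\frac{\|\widetilde{T^{(i)}}\xi_i\|}{\|\xi_i\|}\geq c\|\widetilde{T^{(i)}}\|.\]
Hence $\|\widetilde{T^{(i)}}\|\leq c^{-1}\|T\|$ for $i\geq M$, and taking $\limsup$ and then letting $c\to 1$ gives $\|\phi(T)\|\leq\|T\|$.

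The main obstacle will be the coordination of scales between the localization radius $f(S)$ coming from the $p$-operator norm localization property and the metric cover radius of $\pi_i$: without ensuring that $\pi_i$ is isometric on balls of radius at least $f(S)+S$, the pushforward $\xi_i\mapsto\xi_i'$ would not correctly intertwine $\widetilde{T^{(i)}}$ with $T^{(i)}$, and the norm comparison would fail. This scale coordination is precisely what uses both the $p$-operator norm localization hypothesis on $G$ and the residual finiteness condition $\bigcap_i N_i=\{e\}$, which ensures that $\pi_i$ is an arbitrarily large metric cover for $i$ sufficiently large.
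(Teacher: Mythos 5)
Your proposal is correct and follows essentially the same route as the paper: apply the $p$-operator norm localization property to the lifted operator $\widetilde{T^{(i)}}$ to get a vector of support diameter at most $f(S)$, then for $i$ large enough push it down the covering $\pi_i$ to compare $\|\widetilde{T^{(i)}}\|\leq c^{-1}\|T^{(i)}\|\leq c^{-1}\|T\|$ and bound the $\limsup$. The only cosmetic differences are that you spell out the scale coordination (injectivity radius versus $f(S)+S$) which the paper leaves implicit in ``for each sufficiently large $i$,'' and that you let $c\to 1$ to get contractivity, whereas the paper is content with the bound $\|\phi(T)\|\leq\tfrac{1}{c}\|T\|$.
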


\begin{proof}
Suppose $G$ has the $p$-operator norm localization property relative to $f$ with constant $c$. Let $T\in\CC^p[\square G]$, and suppose $T$ has propagation $r$. For each sufficiently large $i$, there exists a nonzero $\xi\in\ell^p(G,\ell^p)$ with $\mathrm{diam}(\supp(\xi))\leq f(r)$ and $||T^{(i)}|| ||\xi||\geq||\widetilde{T^{(i)}}\xi||\geq c||\widetilde{T^{(i)}}|| ||\xi||$. Hence $||T||\geq||T^{(i)}||\geq c||\widetilde{T^{(i)}}||$ for all such $i$, so $||\phi(T)||\leq\limsup_i||\widetilde{T^{(i)}}||\leq\frac{1}{c}||T||$.
\end{proof}

\begin{definition}
Let $X$ be a proper metric space, and let $Z$ be a countable dense subset of $X$ used to define $\CC^p[X]$.
An operator $T\in B^p(X)$ is said to be a ghost if for all $R,\varepsilon>0$, there exists a bounded set $B\subseteq X$ such that if $\xi\in\ell^p(Z,\ell^p)$ is of norm one and supported in the open ball of radius $R$ about some $x\notin B$, then $||T\xi||<\varepsilon$.
\end{definition}

The proof of the following lemma is exactly the same as that of \cite[Lemma 5.5]{WillettYuI}, and we include it for the convenience of the reader.

\begin{lemma}	\label{ghostlift}
Suppose that $G$ has the $p$-operator norm localization property. Let
\[ \phi:B^p(\square G)\rightarrow\frac{\prod_{i=1}^\infty B^p(|G|)^{N_i}}{\bigoplus_{i=1}^\infty B^p(|G|)^{N_i}} \]
be the homomorphism in Lemma \ref{phihom}. Then $\phi(T^G)=0$ for any ghost operator $T^G$.
\end{lemma}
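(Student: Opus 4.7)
The plan is to show $\|\phi(T^G)\| = 0$ in the quotient algebra by bounding it above by an arbitrary $\varepsilon > 0$. Since $\phi$ is norm-continuous with constant $1/c$ by Lemma \ref{phihom} (where $c$ is the localization constant coming from the $p$-operator norm localization property), the first move is to replace $T^G$ by a finite-propagation approximation. Given $\varepsilon > 0$, I would pick $T \in \CC^p[\square G]$ of some propagation $r$ with $\|T - T^G\|$ small enough that $\|\phi(T^G) - \phi(T)\| < \varepsilon/3$. This reduces the task to bounding $\|\phi(T)\| = \limsup_i \|\widetilde{T^{(i)}}\|$ by $2\varepsilon/3$.

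Next, I would apply the $p$-operator norm localization property to each $\widetilde{T^{(i)}} \in \CC^p[G]^{N_i}$, which has propagation at most $r$, to obtain a unit vector $\xi_i \in \ell^p(G, \ell^p)$ whose support has diameter $\leq f(r)$ and which approximately computes $\|\widetilde{T^{(i)}}\|$ up to the factor $c$. Because $\bigcap_i N_i = \{e\}$, the injectivity radius of $\pi_i : G \to G/N_i$ tends to infinity with $i$, so for $i$ large enough, a neighborhood of $\supp(\xi_i)$ of radius $r$ embeds isometrically into $G/N_i$ via $\pi_i$. Pushing $\xi_i$ forward gives a unit vector $\bar{\xi}_i$ on $G/N_i$, and by the very definition of $\widetilde{T^{(i)}}$ as the equivariant lift of $T^{(i)}$, the identity $\|T^{(i)} \bar{\xi}_i\| = \|\widetilde{T^{(i)}} \xi_i\|$ holds.

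Finally, the ghost hypothesis on $T^G$ enters. With $R = f(r)$ and a suitable small $\varepsilon'$, there is a bounded set $B \subseteq \square G$ outside of which any unit vector of support diameter $\leq R$ is mapped by $T^G$ to a vector of norm less than $\varepsilon'$. Because the components $G/N_i$ of the box space are separated by distances at least $i+j$, the bounded set $B$ meets only finitely many of them; for $i$ large, $G/N_i$ is disjoint from $B$ and so far from the rest of $\square G$ that the finite-propagation operator $T$ acts on $\bar{\xi}_i$ precisely as $T^{(i)}$ does. Therefore $\|T^{(i)}\bar{\xi}_i\| = \|T\bar{\xi}_i\| \leq \|T - T^G\| + \|T^G\bar{\xi}_i\|$, which can be made smaller than $2\varepsilon c/3$ by the initial choice of parameters. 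Dividing by $c$ gives $\|\widetilde{T^{(i)}}\| < 2\varepsilon/3$ for all large $i$, which assembled with the earlier estimate yields $\|\phi(T^G)\| < \varepsilon$; sending $\varepsilon \to 0$ finishes the argument.

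The main obstacle I anticipate is the clean verification that the pushdown construction preserves the relevant norm equality: I must choose $i$ large enough that $f(r) + r$ is smaller than the injectivity radius of $\pi_i$ and simultaneously smaller than the distance from $G/N_i$ to the rest of $\square G$, while also ensuring that the radius $R = f(r)$ used in applying the ghost condition is consistent with the propagation $r$ of the approximant $T$. Once these geometric choices are organized, the rest is standard bookkeeping of the three contributions: $\|T^G - T\|$, the ghost norm bound on $\bar{\xi}_i$, and the localization constant $c$.
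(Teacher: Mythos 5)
Your proposal is correct and follows essentially the same route as the paper's proof: approximate the ghost by a finite-propagation operator, use the $p$-operator norm localization property to produce unit vectors of controlled support diameter nearly attaining $\|\widetilde{T^{(i)}}\|$, push them down to $G/N_i$ for large $i$ where the covering is injective on the relevant scale, and invoke the ghost condition to make these norms uniformly small. The only difference is cosmetic bookkeeping of the $\varepsilon$'s (the paper simply ends with the bound $\varepsilon+2\varepsilon/c$ rather than pre-dividing by $c$).
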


\begin{proof}
Fix $\varepsilon>0$. Let $T^G$ be a ghost operator, and let $T\in\CC^p[\square G]$ have propagation $R$ and be such that $||T^G-T||<\varepsilon$. Let $\widetilde{T^{(i)}}$ be as in the definition of $\phi(T)$, and note that each $\widetilde{T^{(i)}}$ has propagation at most $R$. Suppose $G$ has the $p$-operator norm localization property relative to $f$ with constant $c$. Then for each $i$, there exists a nonzero $\widetilde{\xi_i}\in\ell^p(G,\ell^p)$ of norm one with support diameter at most $f(R)$ such that
\[ ||\widetilde{T^{(i)}}\widetilde{\xi_i}||\geq c||\widetilde{T^{(i)}}||. \]
On the other hand, for all sufficiently large $i$, there exists $\xi_i\in\ell^p(G/N_i,\ell^p)$ of norm one such that $||\widetilde{T^{(i)}}\widetilde{\xi_i}||=||T^{(i)}\xi_i||$. For such $i$, since $T^G$ is a ghost, we have
\[ c||\widetilde{T^{(i)}}||\leq||T^{(i)}\xi_i||\leq||T^G-T||+||T^G\xi_i||<2\varepsilon. \]
Hence
\[ ||\phi(T^G)||<\varepsilon||\phi||+||\phi(T)||\leq\varepsilon||\phi||+\limsup_i||\widetilde{T^{(i)}}||<\varepsilon||\phi||+\frac{2\varepsilon}{c}. \]
Since $\varepsilon$ is arbitrary, and $c$ is independent of $\varepsilon$, this completes the proof.
\end{proof}

\begin{remark}
One can consider the $\ell^p$ uniform Roe algebra $B^p_u(\square G)$, defining 
\[ \phi_u:\CC^p_u[\square G]\rightarrow\frac{\prod_i\CC^p_u[G]^{N_i}}{\bigoplus_i\CC^p_u[G]^{N_i}} \]
in a completely analogous manner using the same formula for the lifts of operators.
If $G$ has the $p$-operator norm localization property, then $\phi_u$ extends to a bounded homomorphism
\[ \phi_u:B^p_u(\square G)\rightarrow\frac{\prod_iB^p_u(|G|)^{N_i}}{\bigoplus_iB^p_u(|G|)^{N_i}} \]
by essentially the same proof as that of Lemma \ref{phihom}.
Moreover, if $e$ is a fixed rank one idempotent operator on $\ell^p$, then we have the following commutative diagram:
\[
\begin{CD}
\frac{\prod_iB^p_u(|G|)^{N_i}}{\bigoplus_iB^p_u(|G|)^{N_i}}		@>\prod_i(\cdot\otimes e)>>	\frac{\prod_iB^p(|G|)^{N_i}}{\bigoplus_iB^p(|G|)^{N_i}}			\\
@A\phi_u AA  @AA\phi A \\
B^p_u(\square G)				@>\cdot\otimes e>>			B^p(\square G)	
\end{CD}
\]
\end{remark}

\section{Kazhdan projections in the $\ell^p$ Roe algebra}
At this point an interlude is necessary in order to introduce and discuss Kazhdan projections in the setting of $\ell^p$ spaces. Our description follows that of \cite{drutu-nowak}.
A representation $\pi$ of $G$ on a Banach space $E$ induces a representation of $\CC G$ on $E$ by the formula
$$\pi(f)=\sum_{g\in G} f(g)\pi_g,$$
for every $f\in \CC G$. For a faithful family $\mathcal{F}$ of representations of $G$ on $\ell^p$, consider the following norm on $\CC G$,
$$\Vert f\Vert_{\mathcal{F},p} = \sup \setd{\Vert \pi(f)\Vert_{\ell^p}}{\pi \in\mathcal{F} }.$$
The completion of $\CC G$ in this norm will be denoted $B^p_{\mathcal{F}}(G)$.

Recall that given an isometric representation $\pi$ of a locally compact group $G$ on a reflexive Banach space $E$, the dual 
space $E^*$ is naturally equipped with the representation $\opi_g=(\pi_g^{-1})^*$.
We have a canonical decomposition of $\pi$ into 
the trivial representation and its complement,
$$E=E^{\pi} \oplus_{\pi} E_{\pi},$$
where $E^{\pi}$ is the subspace of invariant vectors of $\pi$, and $E_{\pi}=\operatorname{Ann}((E^*)^{\opi})$ (cf. \cite[Proposition 2.6 and Example 2.29]{bader2007}).

\begin{definition}
A Kazhdan projection $p\in B^p_{\mathcal{F}}(G)$ is an idempotent such that $\pi(p)\in B(\ell^p)$ is the projection onto $(\ell^p)^\pi$ along $(\ell^p)_{\pi}$ for every $\pi\in \mathcal{F}$.
\end{definition}

Given a finite graph $\Gamma=(V,E)$, the edge boundary $\partial A$ of a subset $A\subseteq V$ is defined to be 
the set of those edges $E$ that have exactly one vertex in $A$. The Cheeger constant of $\Gamma$ is then defined to be
$$h(\Gamma)=\inf_{A\subseteq V} \dfrac{\#\partial A}{\min \{ \#A, \#V\setminus A\}}.$$
Recall that a sequence of finite graphs $\{ \Gamma_n\}$ is a sequence of expanders if 
$$\inf_n h(\Gamma_n)>0.$$
See e.g. \cite{Hooryetal} for an overview.

Let $G$ be a residually finite group and let $\{N_i\}_{i\in \NN}$ be a family of finite index, normal subgroups. 
Consider the box space 
$X=\bigsqcup_i G/ N_i$, as before, and let $\pi_i$ be the quasi-regular representation of $G$ on $\ell^p(G/N_i)$. 
Fix $\mathcal{F}$ to denote the family of representations $\{\pi_i\}_{i\in\mathbb{N}}$.
We say that $G$ has property $\tau$ with respect to the sequence $\{N_i\}$ if the Cayley graphs $G/N_i$ form a family of expanders,
see \cite{lubotzky}.
The following is a special case of \cite[Theorem 1.2]{drutu-nowak}.
\begin{theorem}\cite{drutu-nowak}
Let $G$ be a finitely generated group with property $\tau$ with respect to the family $\left\{N_i\right\}$. 
Then for every $1<p<\infty$, there exists a Kazhdan idempotent in $B^p_{\mathcal{F}}(G)$.
\end{theorem}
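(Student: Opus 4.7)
The plan is to construct the Kazhdan idempotent $q$ by applying holomorphic functional calculus to a suitable averaging element of $\CC\Gamma$, using property $(T)$ to produce a uniform spectral gap across \emph{all} isometric $\ell^p$-representations of $\Gamma$.

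First, I would fix a finite symmetric Kazhdan generating set $S$ of $\Gamma$ and form the averaging element $\mu = \frac{1}{|S|}\sum_{s \in S} s \in \CC\Gamma$. The central technical step is to establish a uniform $\ell^p$-spectral gap: there exists $\delta>0$, depending only on $\Gamma$, $S$, and $p$, such that for every isometric representation $\pi$ of $\Gamma$ on an $\ell^p$-space $E$, with the decomposition $E = E^\pi \oplus_\pi E_\pi$ recalled above, one has
\[ \|\pi(\mu)|_{E_\pi}\| \leq 1-\delta. \]
Classical property $(T)$ gives this directly for unitary Hilbert-space representations. To pass from $p=2$ to general $p\in(1,\infty)$, I would appeal to a transfer technique such as the Mazur map or a complex interpolation/ultrapower argument, which relates almost-invariant unit vectors for isometric $\ell^p$-representations to almost-invariant unit vectors for unitary Hilbert representations in a way that preserves the Kazhdan constant quantitatively.

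Given this uniform spectral gap, the spectrum of $\pi(\mu)$ is contained in $\{1\} \cup \overline{B(0,1-\delta)}$ for every isometric $\ell^p$-representation $\pi$; consequently the spectrum of $\mu$ in $B^p_{\max}(\Gamma)$ is contained in the same set, because the $\|\cdot\|_{\max,p}$-norm is the supremum of operator norms across such representations, so the resolvent of $\mu$ in $B^p_{\max}(\Gamma)$ exists wherever it exists uniformly in each $\pi$. I would then let $\chi$ be a holomorphic function on a neighborhood of $\{1\} \cup \overline{B(0,1-\delta)}$ equal to $1$ near $1$ and $0$ near $\overline{B(0,1-\delta)}$, and define $q := \chi(\mu) \in B^p_{\max}(\Gamma)$ via the holomorphic functional calculus. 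Then $q$ is an idempotent, and for each isometric representation $\pi$, $\pi(q) = \chi(\pi(\mu))$ is the spectral projection of $\pi(\mu)$ corresponding to the eigenvalue $1$. Since $\pi(\mu)$ acts as the identity on $E^\pi$ and has spectrum inside $\overline{B(0,1-\delta)}$ on $E_\pi$, the projection $\pi(q)$ is the identity on $E^\pi$ and zero on $E_\pi$; that is, $\pi(q)$ is the projection onto $E^\pi$ along $E_\pi$, as required.

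The main obstacle is unquestionably the uniform $\ell^p$-spectral gap for all isometric $\ell^p$-representations of $\Gamma$. For unitary representations this is the classical content of property $(T)$, but isometric $\ell^p$-representations form a strictly larger class when $p\neq 2$, and the transfer of the Hilbert-space spectral gap to this class in a way that is uniform over all representations is the delicate ingredient; it is precisely what the cited theorem of Drutu-Nowak supplies. Once this is in hand, the remainder of the proof (functional calculus plus identification of the spectral projections) is essentially formal.
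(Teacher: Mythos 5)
The paper does not prove this statement --- it imports it from \cite{drutu-nowak} --- so the relevant comparison is with the construction there (which also underlies the later sketch in this paper, where $\Vert \mu^n - q_i\Vert \to 0$). Your overall shape (averaging element $\mu$ over a Kazhdan set, a spectral gap uniform over all isometric $\ell^p$-representations, then an idempotent mapping to the projection onto $E^\pi$ along $E_\pi$) is right, and the Mazur-map transfer you invoke for the uniform gap is indeed the standard route. But the step you describe as ``essentially formal'' contains a genuine gap: you pass from the inclusion of the spectrum of $\pi(\mu)$ in $\{1\}\cup \overline{B(0,1-\delta)}$ for every isometric representation $\pi$ to the same inclusion for the spectrum of $\mu$ in $B^p_{\max}(\Gamma)$, on the grounds that $\Vert\cdot\Vert_{\max,p}$ is a supremum over these representations. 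That deduction is not valid: invertibility of $\lambda-\mu$ in $B^p_{\max}(\Gamma)$ requires an inverse lying in the completion of $\CC\Gamma$, and uniform boundedness of the family $(\lambda-\pi(\mu))^{-1}$ does not produce such an element. In the $C^\ast$-setting one may test invertibility in $C^\ast_{\max}(\Gamma)$ on group representations because every representation of the algebra integrates one of the group; no analogous fact is available for $B^p_{\max}(\Gamma)$, so the spectral separation needed to run the holomorphic functional calculus is precisely what is unproved at that point. There is also a circularity in your last paragraph: the uniform $\ell^p$-spectral gap is not ``what the cited theorem supplies'' (the cited theorem is the existence of the Kazhdan idempotent itself); it is a BFGM-type input, where one must additionally justify the invariant decomposition $E=E^\pi\oplus_\pi E_\pi$ and convert absence of almost-invariant vectors in $E_\pi$ into the operator-norm bound via uniform convexity, with constants independent of the representation.

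The construction of \cite{drutu-nowak} sidesteps the spectral computation in the completed algebra entirely, and this is how you should close the gap: from $\Vert \pi(\mu)|_{E_\pi}\Vert\le 1-\delta$ and the bound $\Vert P_\pi\Vert\le 1$ for the projection onto $E^\pi$ along $E_\pi$ (it is a pointwise limit of the contractions $\pi(\mu)^n$), one gets
\[ \Vert \pi(\mu)^n-\pi(\mu)^m\Vert=\Vert(\pi(\mu)^n-\pi(\mu)^m)(I-P_\pi)\Vert\le 2\bigl((1-\delta)^n+(1-\delta)^m\bigr) \]
uniformly in $\pi$, so $(\mu^n)$ is Cauchy in $\Vert\cdot\Vert_{\max,p}$; its limit $q\in B^p_{\max}(\Gamma)$ satisfies $q^2=q$ and $\pi(q)=P_\pi$ for every isometric $\ell^p$-representation $\pi$, which is the desired Kazhdan idempotent. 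If you insist on the functional-calculus formulation, you would first have to exhibit the resolvent (or $q$ itself) inside $B^p_{\max}(\Gamma)$ by such a norm-convergent expression --- at which point the functional calculus is no longer needed.
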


Under the assumptions of the above theorem we have 
\begin{theorem}
There exists a non-compact ghost idempotent $Q=Q^2$ in $B^p(X)$.
\end{theorem}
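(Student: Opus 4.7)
The plan is to construct $Q$ as a block-diagonal operator on $\ell^p(X,\ell^p)=\bigoplus_i\ell^p(G/N_i)\otimes\ell^p$ whose $i$-th block is the image of the Kazhdan idempotent under the quasi-regular representation on $\ell^p(G/N_i)$, tensored with a rank-one idempotent to make matrix entries compact.

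First I would let $q\in B^p_{\max}(G)$ be the Kazhdan idempotent from the preceding theorem. For each $i$, the quasi-regular representation $\pi_i\colon G\to B(\ell^p(G/N_i))$ is isometric and therefore extends to a contractive homomorphism $\pi_i\colon B^p_{\max}(G)\to B(\ell^p(G/N_i))$. By the defining property of the Kazhdan projection, $\pi_i(q)$ projects $\ell^p(G/N_i)$ onto its line of $G$-invariants, which is $\mathrm{span}(\mathbf{1}_{G/N_i})$, along the mean-zero functions; a direct computation shows its matrix entries are all equal to $1/|G/N_i|$. Fix any rank-one idempotent $p_0\in B(\ell^p)$ and define
\[ Q=\bigoplus_i\pi_i(q)\otimes p_0\in B(\ell^p(X,\ell^p)). \]
This is bounded because block norms are uniformly bounded by $\|q\|_{\max,p}\|p_0\|$, and $Q^2=Q$ follows blockwise from $(\pi_i(q))^2=\pi_i(q)$ and $p_0^2=p_0$.

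Next I would show $Q\in B^p(X)$ by approximation. Choose $q_n\in\CC G$ of finite propagation $R_n$ with $q_n\to q$ in $B^p_{\max}(G)$, and set $Q_n=\bigoplus_i\pi_i(q_n)\otimes p_0$. Each $Q_n$ has propagation at most $R_n$ on every box (quotient maps do not increase propagation) and vanishes across boxes, while its entries are finite-rank and, because bounded subsets of $X$ meet only finitely many boxes, each of which is finite, there are only finitely many nonzero entries in any bounded region. Hence $Q_n\in\CC^p[X]$, and the block-diagonal norm estimate
\[ \|Q-Q_n\|=\sup_i\|\pi_i(q-q_n)\otimes p_0\|\leq\|q-q_n\|_{\max,p}\|p_0\|\longrightarrow 0 \]
places $Q$ in $B^p(X)$. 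For the ghost property, fix $R,\varepsilon>0$ and a unit vector $\xi\in\ell^p(X,\ell^p)$ supported in a ball of radius $R$ about some $x_0\in G/N_i$; for $i$ sufficiently large this ball lies inside $G/N_i$ (using $d(G/N_j,G/N_k)\geq j+k$), and then $(Q\xi)(y)=\frac{p_0}{|G/N_i|}\sum_{z\in G/N_i}\xi(z)$ is independent of $y\in G/N_i$. Bounded geometry and Hölder's inequality yield
\[ \|Q\xi\|_p\leq\|p_0\|\,N_R^{1-1/p}\,|G/N_i|^{1/p-1}, \]
which tends to zero as $i\to\infty$ since $p>1$ and $|G/N_i|\to\infty$ (as $G$ is infinite with $\bigcap_iN_i=\{e\}$). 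Taking a bounded set containing the finitely many boxes for which this bound is not yet below $\varepsilon$ establishes the ghost condition. Finally $Q$ has infinite-dimensional range, containing the linearly independent vectors $\mathbf{1}_{G/N_i}\otimes p_0(v)$ for a fixed nonzero $v\in p_0(\ell^p)$, so this idempotent cannot be compact.

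The hard part will be the insistence on $B^p_{\max}(G)$ rather than a reduced variant: one needs a single $q$ to act, simultaneously and with uniform control, on every $\ell^p(G/N_i)$ as the projection onto invariants, and the approximants $q_n$ to converge to $q$ uniformly across all the $\pi_i$. Once this is in hand, the four required properties of $Q$ follow transparently from the explicit $1/|G/N_i|$ form of the matrix entries of $\pi_i(q)$.
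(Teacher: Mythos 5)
Your construction coincides with the paper's: the blocks $\pi_i(q)$ are exactly the averaging projections $\frac{1}{[G:N_i]}M_i$, tensored with a rank-one (finite-rank) idempotent on $\ell^p$, and membership in $B^p(X)$ is obtained from finite-propagation approximants to the Dr\u{u}\c{t}u--Nowak Kazhdan idempotent in $B^p_{\max}(G)$, whose max-norm convergence gives the uniform control across all quotients (the paper uses the specific approximants $\mu^n$). Your write-up additionally makes explicit the ghost estimate and the non-compactness, which the paper's sketch leaves implicit; the argument is correct and essentially the same.
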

\begin{proof}[Sketch of proof]
Consider the projection $q_i=\dfrac{1}{[G:N_i]} M_i$, where $M_i$ is a square matrix indexed by the elements $G/N_i$ with all entries equal to 1. Then
$q=\bigoplus  q_i$ belongs to $B^p_u(X)$. Indeed, by the construction in \cite{drutu-nowak} one can choose a finitely supported probability measure $\mu$ on $G$ such
that
$$\Vert \mu^n - q_i\Vert \to 0$$
uniformly in $i$. As $\mu^n$ are finite propagation operators, it suffices to take $Q$ to be the matrix defined by $Q(x,y)=q(x,y)P$, where $P$ is some rank one projection on $\ell^p$.
\end{proof}

\section{Main result}

After recording a few more ingredients, we shall be ready for the proof of the main result.

For any compact metric space $Y$, the algebra $B^p(Y)$ is isomorphic to the algebra of compact operators $K(\ell^p)$. 
In this case, we have \[ K_0(B^p(Y))\cong K_0(K(\ell^p))\cong \varinjlim K_0(M_n)\cong \mathbb{Z}.\]
The isomorphism $K_0(K(\ell^p))\cong\mathbb{Z}$ can also be seen to be induced by the canonical densely defined trace (see \cite[Subsection 1.7.11]{Pal}) on $K(\ell^p)$.

Let $G$ be a residually finite group, let $N_1\supseteq N_2\supseteq\cdots$ be a sequence of normal subgroups of finite index such that $\bigcap_iN_i=\{e\}$, and let $\square G=\bigsqcup_i G/N_i$ be the box space.
Note that if $n<R$, then \[P_R(\square G)=P_R(\bigsqcup_{i=1}^{n-1}G/N_i)\sqcup\bigsqcup_{i\geq n}P_R(G/N_i).\]
Each $N_i$ acts properly on $P_R(G)$. Moreover, if $B(e,r)\cap N_i=\{e\}$, then the action of $N_i$ on $P_R(G)$ is free, and $\pi:P_R(G)\rightarrow P_R(G)/N_i$ is a covering map (cf. \cite[Lemma 4.2]{OYmax}). Since $N_i$ has finite index in $G$, this covering is cocompact.
Moreover, we have an isomorphism
\[K_0(B^p_L(P_R(\square G))) \cong K_0(B^p_L(P_R(\bigsqcup_{i=1}^{n-1}G/N_i)))\oplus\prod_{i\geq n}K_0(B^p_L(P_R(G/N_i))),\]
which is the cluster axiom for $K$-homology saying that the $K$-homology of a countable disjoint union of spaces is isomorphic to the direct product of the $K$-homology of the individual spaces (cf. \cite[Section 7.4]{HR}).

Since $d(G/N_i,G/N_j)\geq i+j$ if $i\neq j$, 
we also have a homomorphism \[ d:B^p(\square G)\rightarrow\frac{\prod_i B^p(|G/N_i|)}{\bigoplus_i B^p(|G/N_i|)}\cong\frac{\prod_i K(\ell^p(G/N_i,\ell^p))}{\bigoplus_i K(\ell^p(G/N_i,\ell^p))}, \] 
which induces a homomorphism
\[ d_*:K_0(B^p(\square G))\rightarrow K_0\left(\frac{\prod_i K(\ell^p(G/N_i,\ell^p))}{\bigoplus_i K(\ell^p(G/N_i,\ell^p))}\right)\rightarrow \frac{\prod_i K_0(K(\ell^p(G/N_i,\ell^p)))}{\bigoplus_i K_0(K(\ell^p(G/N_i,\ell^p)))} \cong\frac{\prod_i \ZZ}{\bigoplus_i \ZZ}. \]
This map can be used to detect non-zero classes in $K_0(B^p(\square G))$.
Here, for a sequence $(A_i)_{i\in\mathbb{N}}$ of Banach algebras, we have denoted by $\prod_iA_i$ the collection of bounded sequences $\{(a_i)_{i\in\mathbb{N}}:a_i\in A_i,\sup_i||a_i||<\infty\}$, and we have denoted by $\bigoplus_iA_i$ the collection of sequences $(a_i)\in\prod_iA_i$ such that $\lim_{i\rightarrow\infty}||a_i||=0$.

The following proposition is a restatement of \cite[Proposition 2.7]{GWY} after identifying equivariant $K$-homology with the $K$-theory of equivariant localization algebras.

\begin{proposition} \label{KhomInj}
If the classifying space for proper $\Gamma$-actions has finite homotopy type, i.e., there is a model $Z$ of a locally finite CW complex with universal proper $\Gamma$-action such that $Z/\Gamma$ is a compact CW complex, then for any $r>0$, there is $R>0$ such that the following is true: for any two elements $[x],[y]\in K_*(B^p_L(P_r(\Gamma))^{\Gamma'})$, where $\Gamma'$ is a subgroup of $\Gamma$ with finite index, if $[x]=[y]$ in $\lim_{r\rightarrow\infty}K_*(B^p_L(P_r(\Gamma))^{\Gamma'})$, then $[x]=[y]$ in $K_*(B^p_L(P_R(\Gamma))^{\Gamma'})$.
\end{proposition}

By \cite[Theorem 1]{Meintrup-Schick}, the proposition applies to hyperbolic groups.
Now we have all the necessary ingredients to prove our main result.

\begin{theorem}
Let $p\in(1,\infty)$. Let $G$ be a residually finite hyperbolic group. Let $N_1\supseteq N_2\supseteq\cdots$ be a sequence of normal subgroups of finite index such that $\bigcap_iN_i=\{e\}$. If $q\in B^p(\square G)$ is a non-compact ghost idempotent, then $[q]\in K_0(B^p(\square G))$ is not in the image of the $\ell^p$ coarse Baum-Connes assembly map.

In particular, assuming that the box space $\square G=\bigsqcup_i G/N_i$ is an expander, i.e., that $G$ has property $\tau$ with respect to the family $\{N_i\}$, the class of the Kazhdan projection is not in the image of the $\ell^p$ coarse Baum-Connes assembly map. 
\end{theorem}

\begin{proof}
If $q$ is a non-compact ghost idempotent, then $d_*[q]\neq 0$ so $[q]\neq 0$ in $K_0(B^p(\square G))$.
On the other hand, since $q$ is a ghost operator, and hyperbolic groups have the $p$-operator norm localization property by Corollary \ref{pONLhyp}, we have $\phi(q)=0$ by Lemma \ref{ghostlift} so $\phi_*[q]=0$ in $\frac{\prod_iK_0(B^p(|G|)^{N_i}}{\bigoplus_iK_0(B^p(|G|)^{N_i}}$.

Consider the following diagram for fixed $R$ and $n<R$, in which the horizontal arrows are given by the respective assembly maps and the left vertical arrow is given by the induction isomorphisms from Proposition \ref{ind}.
\[
\begin{CD}
0\oplus\prod_{i\geq n}K_0(B^p_L(P_R(G))^{N_i})		@>>>	\frac{\prod_iK_0(B^p(|G|)^{N_i})}{\bigoplus_iK_0(B^p(|G|)^{N_i})}			\\
@A0\oplus\prod_{i\geq n}ind_iAA  @AA\phi_*A \\
K_0(B^p_L(P_R(\bigsqcup_{i=1}^{n-1}G/N_i)))\oplus\prod_{i\geq n}K_0(B^p_L(P_R(G/N_i)))				@>>>			K_0(B^p(\square G))	
\end{CD}
\]
It follows from the definitions of the maps that the diagram commutes.

Suppose $[q]$ is in the image of the $\ell^p$ coarse Baum-Connes assembly map.
Then there exist $r>0$ and $x\in K_0(B^p_L(P_r(\square G)))$ that maps to $[q]$.

Choose $(y_i)\in\prod_i K_0(B^p(|G|)^{N_i})$ that lifts $\phi_*[q]$. Then there exists $M_r$ such that $y_i=0$ in $K_0(B^p(|G|)^{N_i})$ for all $i\geq M_r$.
Each $N_i$ is hyperbolic so the $\ell^p$ Baum-Connes assembly map is injective for each $N_i$ by Corollary \ref{LpBC}.
Thus by Proposition \ref{KhomInj}, there exists $R>0$ independent of the subgroups $N_i$, and there exists $M_R\geq M_r$ such that $ind_i(x_i)=0$ for all $i\geq M_R$, and thus $x_i=0$ for all $i\geq M_R$.
But this contradicts $[q]\neq 0$.

Finally, under the expander assumption, the Kazhdan projection is a non-compact ghost idempotent in $B^p(\square G)$ so its class is not in the image of the $\ell^p$ coarse Baum-Connes assembly map.
\end{proof}

\section{Remarks and open questions}

In this final section, we list a few questions that we do not have the answer to and that may be of interest to the reader.

In the results above it was necessary to assume $1<p<\infty$. The main reason for this assumption is the construction of Kazhdan projections and the associated
ghost projection in $B^P(X)$. Indeed, the techniques used here and originally in \cite{drutu-nowak} require uniform convexity of the underlying Banach space.
Therefore the following question is natural in this context.
\begin{question}
What happens when $p=1$? (How to construct Kazhdan projections?)
\end{question}
The above situation bears certain resemblance to the case of the Bost conjecture, in which the right hand side of the Baum-Connes conjecture, namely 
the $K$-theory of the reduced group $C^*$-algebra $C^*_r(G)$ is replaced with the $K$-theory of the Banach algebra $\ell^1(G)$.

It also seems that the argument used here would extend also to the case of $\ell^p(Z,E)$, where $p>1$ and $E$ is a uniformly convex Banach space, or a Banach space of nontrivial type.
In this case also $\ell^p(Z,E)$ is uniformly convex, or has nontrivial type, respectively. Lafforgue \cite{lafforgue} and Liao \cite{liao} showed that in such cases there exist Kazhdan projections
and it would therefore be possible to construct the associated ghost projection. It is natural to state
\begin{question}
What happens if we consider operators on $\ell^p(Z,E)$ for Banach spaces $E$ other than $\ell^p$ in the definition of the Roe algebra?
\end{question}
As noted earlier, even $E=L^p[0,1]$ results in an algebra that is not isomorphic to the one we have used in this paper, although its $K$-theory may be the same.

Our formulation of the $\ell^p$ coarse Baum-Connes assembly map is based on a straightforward modification of a model of the original coarse Baum-Connes assembly map from \cite{LocAlgBC}.
One can check that for each $p$ and for a metric space $X$ with bounded geometry, the functors $X\mapsto\lim_{R\rightarrow\infty}K_n(B^p_L(P_R(X)))$ form a coarse homology theory in the sense of \cite[Definition 2.3]{HR95}.
What is of interest, and which goes back to one of the original motivations for studying $L^p$ analogs of Baum-Connes type assembly maps, is to identify the left-hand side of these assembly maps.
The following question has been answered affirmatively for finite-dimensional simplicial complexes in \cite[Proposition 5.20]{ZhangZhou}.

\begin{question}
Is the $K$-theory of $\ell^p$ localization algebras associated to bounded geometry metric spaces independent of $p$?
\end{question}

In this paper, we considered the problem of whether the $\ell^p$ coarse assembly map is surjective but we have not considered injectivity, whereas it was shown in \cite{WillettYuI} for the $p=2$ case that the coarse assembly map is injective for spaces of graphs with large girth.
\begin{question}
Is the $\ell^p$ coarse Baum-Connes assembly map injective for the expanders considered in this paper?
\end{question}

Finally, we would like to comment about the $p$-operator norm localization property.
Combining Proposition \ref{MSP} with \cite[Theorem 5.1]{Sako}, we know that a bounded geometry metric space with the 2-operator norm localization property will have the $p$-operator norm localization property for all $p\in[1,\infty)$. We do not know whether the converse holds, but we thank the referee for suggesting a possible approach to this problem, namely by considering uniform local amenability along the lines of \cite{ULA} and \cite{ULA2}.

\appendix
\section{Alternative description of the assembly map}

In this appendix, we provide an alternative description of the $\ell^p$ coarse Baum-Connes assembly map, following the approach in the $p=2$ case from \cite{QR} and \cite[Appendix B]{GuWY}.
We do this by considering a larger algebra generated by pseudolocal operators, and we also show that classes in the image of the $\ell^p$ coarse Baum-Connes assembly map can be represented by elements with finite propagation (at least for $K_0$).

For a proper metric space $X$, and a countable dense subset $Z\subseteq X$, the Banach space $\ell^p(Z,\ell^p)$ is equipped with a natural multiplication action of $C_0(X)$ by restriction to $Z$.

\begin{definition} \label{def:dpl}
Let $X$ be a proper metric space, and fix a countable dense subset $Z\subseteq X$. A bounded operator $T$ on $\ell^p(Z,\ell^p)$ is said to be pseudolocal if the commutator $[f,T]$ is a compact operator for every $f\in C_0(X)$.

Let $\DD^p[X]$ be the algebra of all finite propagation, pseudolocal operators on $\ell^p(Z,\ell^p)$,
and define $D^p(X)$ to be the closure of $\DD^p(X)$ in $B(\ell^p(Z),\ell^p)$.

Let $\DD^p_L[X]$ be the algebra of all bounded, uniformly continuous functions $f:[0,\infty)\rightarrow\DD^p[X]$ such that $\prop(f(t))\rightarrow 0$ as $t\rightarrow\infty$,
and define $D^p_L(X)$ to be the completion of $\DD^p_L[X]$ under the supremum norm.
\end{definition}

Note that $B^p(X)$ and $B^p_L(X)$ are closed ideals in $D^p(X)$ and $D^p_L(X)$ respectively for a proper metric space $X$ with bounded geometry.

If $T$ is pseudolocal, then $fTg$ is a compact operator for every pair of functions $f,g\in C_0(X)$ with disjoint supports. This is because $fTg=f[T,g]$.
Conversely, we have the following version of Kasparov's lemma (cf. \cite[Proposition 3.4]{Kasp}) whose proof follows closely that of \cite[Lemma 5.4.7]{HR}:
\begin{lemma} \label{pKaspLem}
Let $X$ be a proper metric space, let $Z$ be a countable dense subset of $X$, and let $T\in B(\ell^p(Z,\ell^p))$. Suppose that $fTg$ is a compact operator for every pair of functions $f,g\in C_0(X)$ with disjoint compact supports. Then $T$ is pseudolocal.
\end{lemma}

For the rest of this subsection, we will assume that $X$ is a proper metric space with bounded geometry.

\begin{proposition} \label{PropDLtrivialK}
The algebra $D^p_L(X)$ has trivial $K$-theory. Hence the boundary map $\partial:K_{*+1}(D^p_L(X)/B^p_L(X))\rightarrow K_*(B^p_L(X))$ is an isomorphism.
\end{proposition}

The proof of the proposition is the same as that of \cite[Proposition 3.5]{QR}.

%

Now we want to show that classes in $K_1(D^p_L(X)/B^p_L(X))$ have finite propagation representatives whose inverses also have finite propagation. From this, we see that classes in $K_0(B^p_L(X))$ have finite propagation representatives.
To simplify notation, given a Banach algebra $A$, we will write $\mathfrak{T}A$ to denote the algebra of all bounded, uniformly continuous functions from $[0,\infty)$ to $A$, equipped with the supremum norm.

Note that $\mathfrak{T}D^p(X)$ contains $\mathfrak{T}B^p(X)$ as a closed ideal, while $D^p_L(X)$ contains $B^p_L(X)$ as a closed ideal. Thus we have the following commutative diagram of short exact sequences, where the upper vertical maps are induced by evaluation at zero, while the lower vertical maps are induced by forgetting the condition that propagation tends to zero.
\[\minCDarrowwidth20pt
\begin{CD}
0 @>>>	B^p(X)	@>>>	D^p(X)	@>>>	D^p(X)/B^p(X)	@>>> 0	\\
@. @AAA @AAA @AAA \\
0 @>>>	\mathfrak{T}B^p(X)	@>>>	\mathfrak{T}D^p(X)	@>>>	\mathfrak{T}D^p(X)/\mathfrak{T}B^p(X)	@>>> 0	\\
@. @AAA @AAA @AAA \\
0 @>>>	B^p_L(X)	@>>>	D^p_L(X)	@>>>	D^p_L(X)/B^p_L(X)	@>>> 0	\\
\end{CD}
\]

The proof of the next lemma involves truncating operators using a partition of unity in a way adapted to the $\ell^p$ norm and the dual $\ell^q$ norm (cf. \cite[Lemma 6.3]{WillettSpakula}) for $p\in(1,\infty)$.
Other than this little modification, the proof is the same as that of \cite[Lemma 2.2]{QR}.

\begin{lemma} \label{TruncLem}
There is a contractive linear map $\Phi:\mathfrak{T}D^p(X)\rightarrow D^p_L(X)$ such that
\begin{enumerate}
\item $\Phi(a)(t)$ depends only on $a(t)$,
\item $\prop(\Phi(a)(t))\leq\prop(a(t))$ for all $a\in \mathfrak{T}D^p(X)$,
\item $\Phi(a)(t)-a(t)\in B^p(X)$ for all $a\in \mathfrak{T}D^p(X)$.
\end{enumerate}
Moreover, $\Phi$ induces a homomorphism \[\widetilde{\Phi}:\mathfrak{T}D^p(X)/\mathfrak{T}B^p(X)\rightarrow D^p_L(X)/B^p_L(X).\]
\end{lemma}

\begin{remark}
Going through the proof of the lemma, one sees that given any $k\geq 1$, one can actually arrange for $\prop(\Phi(a)(t))\leq 1/k$ for all $t\in[0,\infty)$.
\end{remark}

The next proposition can be proved in the same way as \cite[Proposition 2.3]{QR}.

\begin{proposition} \label{PropIncIsom}
The homomorphism \[\iota:D^p_L(X)/B^p_L(X)\rightarrow \mathfrak{T}D^p(X)/\mathfrak{T}B^p(X)\] induced by inclusion is an isomorphism (with inverse $\widetilde{\Phi}$).
\end{proposition}

%

\begin{corollary} \label{CorFinProp}
Every class in $D^p_L(X)/B^p_L(X)$ can be represented by an element in $D^p_L(X)$ with finite (arbitrarily small) propagation. For every class $[u]\in K_1(D^p_L(X)/B^p_L(X))$ with $u$ invertible in $M_n(D^p_L(X)/B^p_L(X))$ for some $n$, there is a lift $x\in M_n(D^p_L(X))$ of $u$ with finite propagation, and there is a lift $y\in M_n(D^p_L(X))$ of $u^{-1}$ with finite propagation.
\end{corollary}

From an explicit formula for the $K$-theory index map (see \cite[Definition 1.46 and Equation (1.43)]{CMR}), we get the following:

\begin{corollary} \label{finproprep}
Every class in $K_0(B^p_L(X))$ can be represented as a formal difference $[p]-[q]$, where $p$ and $q$ are idempotents with finite propagation in matrix algebras over the unitization $\widetilde{B^p_L(X)}$.

Hence, every class in the image of the $\ell^p$ coarse Baum-Connes assembly map can be represented as a formal difference $[p']-[q']$, where $p'$ and $q'$ are idempotents with finite propagation in matrix algebras over the unitization $\widetilde{B^p(X)}$.
\end{corollary}

\begin{proposition} \label{PropEvalIsom}
The homomorphisms 
\begin{align*}
\mathfrak{T}B^p(X)&\rightarrow B^p(X), \\
\mathfrak{T}D^p(X)&\rightarrow D^p(X), \\
\mathfrak{T}D^p(X)/\mathfrak{T}B^p(X)&\rightarrow D^p(X)/B^p(X),
\end{align*} 
induced by evaluation at zero induce isomorphisms on $K$-theory.
\end{proposition}

Using the six-term exact sequence and the five lemma, it suffices to show that the first two homomorphisms induce isomorphisms on $K$-theory. This can be done in essentially the same way as in the proof of \cite[Proposition 3.6]{QR}.

%

Combining the isomorphisms in Propositions \ref{PropDLtrivialK}, \ref{PropIncIsom}, and \ref{PropEvalIsom}, we get the following commutative diagram:
\[
\begin{CD}
K_{*+1}(D^p(X)/B^p(X))	@>\partial>>	K_*(B^p(X))		\\
@A\cong Ae_0A @AAe_0A \\
K_{*+1}(D^p_L(X)/B^p_L(X))	@>\cong>\partial>	K_*(B^p_L(X))
\end{CD}
\]
In particular, $\partial:K_{*+1}(D^p(X)/B^p(X))\rightarrow K_*(B^p(X))$ is an isomorphism if and only if $e_0:K_*(B^p_L(X))\rightarrow K_*(B^p(X))$ is an isomorphism.

Applying this fact to each Rips complex and taking limits, we get

\begin{theorem} \label{ThmAltAssembly}
The $\ell^p$ coarse Baum-Connes conjecture holds for $X$ if and only if the limit (as $R\rightarrow\infty$) of the compositions
\[ K_{*+1}(D^p(P_R(X))/B^p(P_R(X)))\stackrel{\partial}{\rightarrow} K_*(B^p(P_R(X)))\stackrel{i_R}{\rightarrow} K_*(B^p(X)) \]
is an isomorphism.
\end{theorem}

We note that the results above can be adapted to the equivariant case (cf. \cite[Appendix B]{GuWY} when $p=2$).

Finally, we note that failure of the $\ell^p$ coarse Baum-Connes conjecture can be detected by certain obstruction groups (cf. \cite{CW,Yu98,Yu00} when $p=2$).

\begin{definition}
For a proper metric space $X$ with bounded geometry, define 
\begin{align*}
B^p_{L,0}(X) &= \{f\in B^p_L(X):f(0)=0\},\\ 
D^p_{L,0}(X) &= \{f\in D^p_L(X):f(0)=0\}.
\end{align*}
\end{definition}

Note that $B^p_{L,0}(X)$ is a closed ideal in $D^p_{L,0}(X)$. Moreover, we have the following commutative diagram with exact rows and columns (cf. \cite[Section 5]{CW}):
\[\minCDarrowwidth20pt
\begin{CD}
@.	0	@.	0	@.	0	@.		\\
@. @VVV @VVV @VVV @. \\
0 @>>> B^p_{L,0}(X)	@>>>	D^p_{L,0}(X)	@>>>	D^p_{L,0}(X)/B^p_{L,0}(X) @>>> 0 \\
@. @VVV @VVV @VVV @. \\
0 @>>> B^p_L(X)	@>>>	D^p_L(X)	@>>>	D^p_L(X)/B^p_L(X) @>>> 0 \\
@. @VVe_0V @VVe_0V @VVe_0V @. \\
0 @>>> B^p(X)	@>>>	D^p(X)	@>>>	D^p(X)/B^p(X) @>>> 0 \\
@. @VVV @VVV @VVV @. \\
@.	0	@.	0	@.	0	@.
\end{CD}
\]
Applying the $K$-theory functor and applying the earlier results, we then get the following commutative diagram with exact rows and columns:
\[\minCDarrowwidth5pt
\begin{CD}
@.	\vdots	@.	\vdots	@.	\vdots	@.	\vdots	\\
@. @VVV @VVV @VVV @VVV @. \\
\cdots @>>> K_{*+1}(\frac{D^p_{L,0}(X)}{B^p_{L,0}(X)}) @>>> K_*(B^p_{L,0}(X))	@>>>	K_*(D^p_{L,0}(X))	@>>>	K_*(\frac{D^p_{L,0}(X)}{B^p_{L,0}(X)}) @>>> \cdots \\
@. @VVV @VVV @VVV @VVV @. \\
\cdots @>>> K_{*+1}(\frac{D^p_L(X)}{B^p_L(X)}) @>\cong>> K_*(B^p_L(X))	@>>>	0	@>>>	K_*(\frac{D^p_L(X)}{B^p_L(X)}) @>>> \cdots \\
@. @V\cong VV @VVV @VVV @V\cong VV @. \\
\cdots @>>> K_{*+1}(\frac{D^p(X)}{B^p(X)}) @>>> K_*(B^p(X))	@>>>	K_*(D^p(X))	@>>>	K_*(\frac{D^p(X)}{B^p(X)}) @>>> \cdots \\
@. @VVV @VVV @VVV @VVV @. \\
@.	\vdots	@.	\vdots	@.	\vdots	@.	\vdots
\end{CD}
\]
It follows that $K_*(D^p_{L,0}(X)/B^p_{L,0}(X))=0$ so \[K_*(B^p_{L,0}(X))\cong K_*(D^p_{L,0}(X))\cong K_{*+1}(D^p(X)).\]
\begin{corollary}
The $\ell^p$ coarse Baum-Connes conjecture holds for $X$ if and only if one of the following vanishes:
\begin{itemize} 
\item $\lim_{R\rightarrow\infty}K_*(B^p_{L,0}(P_R(X)))$, 
\item $\lim_{R\rightarrow\infty}K_*(D^p_{L,0}(P_R(X)))$, 
\item $\lim_{R\rightarrow\infty}K_*(D^p(P_R(X)))$. 
\end{itemize}
\end{corollary}
In other words, any of the above can serve as an obstruction group for the $\ell^p$ coarse Baum-Connes assembly map.

\bibliographystyle{amsxport}
\bibliography{bib}

\end{document}